\documentclass[11pt,final]{article}
\usepackage[pdftex,colorlinks=true,urlcolor=blue,citecolor=blue]{hyperref}

\usepackage{mystuff}
\usepackage{delimdelim,showlabels,booktabs}

\newcommand{\diameter}{\bar{d}}
\newcommand{\area}{\operatorname{Area}}

\newcommand{\fine}{\mathsf{f}}
\newcommand{\coarse}{\mathsf{c}}
\numberwithin{equation}{section}
 
\crefformat{ineq}{(#2#1#3)}
\crefmultiformat{ineq}{(#2#1#3)}%
{ and~(#2#1#3)}{, (#2#1#3)}{ and~(#2#1#3)}

%\crefmultiformat{section}{\Sview\S#2#1#3}{and~#2#1#3}{, #2#1#3}{  and~#2#1#3}
%
\crefformat{subsection}{\S#2#1#3}
\crefmultiformat{subsection}{\S\S#2#1#3}{and~#2#1#3}{, #2#1#3}{  and~#2#1#3}
\crefname{figure}{Figure}{Figures}%
\usepackage{pgfplots} 
\pgfplotsset{compat=1.3}
\title{A walk outside spheres for the fractional Laplacian: fields and first eigenvalue}
\author{Tony Shardlow\footnote{Department of Mathematical Sciences,
 University of Bath, Bath BA2 7AY, UK. %
\texttt{t.shardlow@bath.ac.uk}%
}%  
}
\date{\today}
 
%-------------------f
\begin{document}
\maketitle
\begin{abstract}
	The  solution of the exterior-value problem for the
	fractional Laplacian can be computed by a walk-outside-spheres algorithm. This involves sampling \(\alpha\)-stable Levy processes on their exit from
	maximally inscribed balls and sampling their occupation
	distribution. Kyprianou, Osojnik, and Shardlow (2017) developed this
	algorithm, providing a complexity analysis and an  implementation,
	for approximating the solution at a single point in the domain. This
	paper shows how to efficiently sample the whole field by  generating
	an approximation in $L^2(D)$, for a domain $D$. The method takes
	advantage of a hierarchy of triangular meshes and uses the
	multilevel Monte Carlo method for Hilbert space-valued quantities of
	interest. We derive complexity bounds in terms of the fractional
	parameter $\alpha$ and demonstrate that the method gives accurate results
	for
	two
	problems with exact solutions. Finally, we show how to couple the
	method with the variable-accuracy Arnoldi iteration to compute the
	smallest eigenvalue of the fractional Laplacian. A criteria is
	derived for the variable accuracy and a comparison is given with
	analytical results of Dyda (2012).

	\medskip

	\noindent\textbf{AMS subject classification: } 65C05, 34A08, 60J75, 34B09.

	\medskip

	\noindent\textbf{Keywords: } Fractional Laplacian, Walk on spheres, Levy
	processes, Arnoldi algorithm, exterior-value problems, multilevel Monte
	Carlo, Numerical solution of PDEs, eigenvalue problems.
\end{abstract}

% REQUIRED

\section{Introduction}
Walk On Spheres is a classical method for solving the Poisson problem
\[
	-\Delta u=f\;%
	\text{on $D$},%
	\qquad u=g\; %
	\text{on $\partial D$}
\]
for a  domain $D\subset \real^d$, boundary data $g\colon \partial
	D\to \real$ and source term $f\colon D\to \real$. In~\cite{kos}, the
algorithm was extended to a Walk Outside Spheres (WOS) algorithm for the
following problem for the fractional Laplacian: find $u\colon D\to \real$
such that
\begin{equation}
	\pp{-\Delta}^{\alpha/2} u=f\; \text{ on $D$},\qquad u=g \;\text{on
		$D^{\coarse}$},%
	\label{main}
\end{equation}
for $\alpha\in(0,2)$ and exterior data $g\colon D^{\coarse}\to\real$. A review of
the fractional Laplacian and its importance in the applied sciences
is given by~\cite{Lischke2018-qb}, which includes a description of WOS and
other
numerical
approaches. The fractional Laplacian in \cref{main} is the generator of the $\alpha$-stable
Levy process on $\real^d$. This observation leads to the following identity,
on which
the WOS algorithm is based:
\begin{equation}
	u(\vec x)%
	=\mean{g(\vec X(\tau))+\sum_{n=0}^{N^{\vec x
		}-1} r_n^\alpha \int_{B(\vec 0,1)}f(\vec x_n+r_n \vec y) \,V_1(\vec y)\,d\vec y},%
	\qquad%
	\vec x\in D.
	\label{kos}
\end{equation}
To define the terms here, consider an $\alpha$-stable Levy
process $\vec X(t)$, $t\ge 0$, starting at $\vec X(0)=\vec x$,
and let $\tau>0$ be the first-exit time of $\vec X(t)$ from $D$.
Define the discrete-time WOS process starting at $\vec x_0=\vec
	x$ by $\vec x_n=\vec X(\tau_n)$, where $\tau_0=0$ and $\tau_n=\min\Bp{t>
		\tau_{n-1}\colon \vec X(t)\not  \in B(\vec x_{n-1},r_n)}$, where $B(\vec
	x_{n-1},r_n)$ is the ball of maximum radius $r_n$ contained in $D$ and centred at
$\vec x_{n-1}$. Further, $N^\vec x$ is the exit time for the WOS process
$\vec x_n$ from $D$ (so $\tau_{N^{\vec x}}=\tau$ and $\vec X(\tau)=\vec {x}_{N^{\vec
			x}}$) and $V_1$ is the expected occupation  density of $\vec
	X(t)$ before exiting a unit ball after starting at the origin,
and given by~\cite{BGR}
\begin{align}
	V_1(\vec y)
	= 2^{-\alpha}\,\pi^{-d/2}\, \frac{\Gamma(d/2)}{\Gamma(\alpha/2)^{2}}\,
	\norm{\vec y}^{\alpha -d}\, {\int_0^{\norm{\vec y}^{-2}-1}(u+1)^{-d/2}u^{\alpha/2-1}\, du}, \quad \norm{\vec y}\le 1.\label{V}
\end{align}

\cite{kos} provides a complete Monte Carlo algorithm for
approximating $u(\vec x)$ for a single $\vec x\in D$. It works by sampling
the WOS process $\vec x_n$ and the occupation density $V_1$, and computing
$u(\vec x)$ as a sample average. \cite{kos} includes an
implementation~\cite{WOS-code} and a study of  the mean  number of WOS steps
required
for the sampling of $\vec X(\tau)$. This paper addresses the problem
of calculating the whole field $u\colon D\to \real$ as an element of
$L^2 (D)$ and the leading eigenvalue of the fractional Laplacian.
In~\cref{review}, we review  the existence and regularity
theory
for~\cref{main} as well as the Walk Outside Spheres (WOS) algorithm
from~\cite{kos}. \cref{field} develops a more efficient algorithm for
computing the solution $u\in L^2(D)$ using multilevel Monte Carlo.
The key  step is the coupling between WOS  solves on nested
triangular meshes, which leads to a bound  on the complexity
of the
method. Numerical experiments  show  that the method gives accurate results for two
problems with known solutions. In \cref{eigenvalue}, we turn to the
computation of the smallest eigenvalue of the fractional Laplacian by
using the field solve as part of an Arnoldi iteration. To make the
process more efficient, we show how the accuracy of the field solve
should be varied during the Arnoldi iteration. Computations are given,
comparing the method to analytical results of~\cite{Dyda2012-kl}. The
Julia code used for the computations is available for
download\footnote{\url{https://github.com/tonyshardlow/julia_wos}}. In \cref{conclusion}, we include a comparison of the proposed method to the adaptive finite element method of \cite{Ainsworth2017-xf}.

% %
\section{Review}\label{review}
We will make use of the following bounds on the unique solution to
\cref{main}. We use $C^r(D)$ to denote the Banach space of $r$-times differentiable functions
with the
supremum norm on derivatives up to order $r\in \naturals$. For $s\in (0,1)$, $C^{r+s}(D)$ denotes the H\"older-continuous subspace of $u\in C^r(D)$ with norm $\norm{u}_{C^r}+\sup_{\abs{\vec r}=r}[\mathcal{D}^{\vec r} u]_s<\infty $, for
\[
	[u]_s\coloneq \sup_{\vec x,\vec y\in D } \frac{\abs{u(\vec x)-u(\vec y)}}{
		\norm{\vec x-\vec y}^s}
\]
and $\mathcal{D}^{\vec r}$ is the partial derivative defined by the multi-index $\vec r$.
\begin{theorem}
	\label[theorem]{thm:eur}
	For a  bounded Lipschitz domain $D$, suppose that $g\colon D^{\coarse}\to
		\real$
	is continuous and satisfies
	\[
		\int_{D^\coarse} \frac{\abs{g(\vec x)}}%
		{1+\norm{\vec x}^{\alpha+d}}\,d\vec x<\infty,
	\]
	and that $f\in C^r(\overline D)$, for some $r\in\naturals$ with $r>\alpha$.
	Then, there exists a unique continuous solution to \cref{main}. Further, if $g$ is uniformly bounded,
	$\norm{u}_{C^{r+\alpha}} \le C(\alpha,D) (\norm{g}_\infty + \norm{f}_{C^r})$.
\end{theorem}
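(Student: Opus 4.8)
The plan is to construct the solution from the stochastic representation already displayed in \cref{kos}, show it solves \cref{main}, deduce uniqueness from a maximum principle, and obtain the quantitative bound by combining a global $L^\infty$ estimate with interior Schauder estimates for the fractional Laplacian.

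\emph{Well-posedness of the representation.} Collapsing the WOS sum into a single occupation integral, the candidate solution is
\[
	u(\vec x)=\mean{g(\vec X(\tau))+\int_0^\tau f(\vec X(t))\,dt},
\]
where $\vec X$ is the $\alpha$-stable Lévy process with $\vec X(0)=\vec x$ and $\tau$ its first-exit time from $D$. Since $D$ is bounded, $\mean{\tau}$ is bounded uniformly over $\vec x\in D$ (compare with the exit time from a circumscribed ball using the scaling of the process), so the $f$-integral is finite and $O(\norm{f}_\infty)$. For the boundary term, the law of $\vec X(\tau)$ on $D^\coarse$ is absolutely continuous with a density decaying like $\norm{\vec y}^{-(\alpha+d)}$ as $\norm{\vec y}\to\infty$ — this comes from the explicit Poisson kernel of the ball recorded in \cite{BGR} together with the WOS iteration itself — so the hypothesis $\int_{D^\coarse}\abs{g(\vec y)}/(1+\norm{\vec y}^{\alpha+d})\,d\vec y<\infty$ is exactly what makes $\mean{\abs{g(\vec X(\tau))}}<\infty$. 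Continuity of $u$ then follows from continuity of $f$ and $g$ together with continuity of the exit law in the starting point, using that every boundary point of a Lipschitz domain is regular.

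\emph{The equation and uniqueness.} Applying the strong Markov property at the first exit $\tau_\rho$ from a small ball $B(\vec x,\rho)\subset D$ gives the mean-value identity $u(\vec x)=\mean{u(\vec X(\tau_\rho))}+\mean{\int_0^{\tau_\rho}f(\vec X(t))\,dt}$; sending $\rho\to 0$ and using Dynkin's formula identifies $u$ as a solution of $(-\Delta)^{\alpha/2}u=f$ in $D$ — first in the viscosity sense, then classically once the regularity below is available — while $u=g$ on $D^\coarse$ holds by definition of $\tau$. For uniqueness, the difference $w$ of two continuous solutions is $\alpha$-harmonic in $D$ and vanishes on $D^\coarse$, so the maximum principle for $(-\Delta)^{\alpha/2}$ on bounded domains forces $w\equiv 0$; equivalently, the Dynkin argument applied to any solution shows it must coincide with the representation above.

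\emph{The bound and the main obstacle.} With $g$ uniformly bounded the representation gives at once $\norm{u}_\infty\le\norm{g}_\infty+C(D)\norm{f}_\infty$. On any ball $B_{2R}$ compactly contained in $D$, the interior Schauder estimate for $(-\Delta)^{\alpha/2}$ (Silvestre; Ros-Oton and Serra) reads $\norm{u}_{C^{r+\alpha}(B_R)}\le C\bigl(\norm{f}_{C^r(B_{2R})}+\norm{u}_{L^\infty(\real^d)}\bigr)$, and the last term is controlled by $\norm{g}_\infty+\norm{f}_\infty$; covering $D$ and tracking the geometric dependence of the constants yields $\norm{u}_{C^{r+\alpha}}\le C(\alpha,D)(\norm{g}_\infty+\norm{f}_{C^r})$. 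The delicate point is the behaviour at $\partial D$: exterior-value solutions are in general only $C^{\alpha/2}$ up to a Lipschitz boundary (the model case is $u(\vec x)=(1-\norm{\vec x}^2)_+^{\alpha/2}$ for $f\equiv 1$, $g\equiv 0$ on the unit ball), so the $C^{r+\alpha}$ control is essentially interior and uniform over $D$ only after the distance-to-boundary weights are absorbed into $C(\alpha,D)$. Obtaining the exit-law decay in the first step for a merely Lipschitz domain, and making the $D$-dependence of the Schauder constants explicit, are the parts that require the most care; both can be reduced to the ball estimates of \cite{BGR} through the WOS construction, which is why \cref{kos} is the natural starting point.
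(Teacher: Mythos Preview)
Your proposal is correct and follows essentially the same route as the paper: existence, uniqueness, and the $L^\infty$ bound come from the probabilistic representation (the paper simply cites \cite[Theorem~6.1]{kos}, whose content you have sketched), and the $C^{r+\alpha}$ estimate comes from interior Schauder bounds for the fractional Laplacian (the paper cites \cite[Corollary~3.5]{Ros-Oton2016-nf}, which is the Ros-Oton--Serra result you invoke). Your explicit flag that the $C^{r+\alpha}$ control is only interior, with the boundary behaviour limited to $C^{\alpha/2}$, is a valid clarification of a point the paper leaves implicit in the phrase ``on any ball contained in $D$''.
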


\begin{proof}
	The existence and uniqueness is provided by \cite[Theorem 6.1]{kos}. This
	result also gives that $\norm{u}_\infty \le C\, ( \norm{f}_\infty+\norm{g}_\infty)$
	for a constant $C$ independent of $u$. Now apply  \cite[Corollary
		3.5]{Ros-Oton2016-nf} on any ball contained in $D$ to gain the $C^{r+\alpha}$
	regularity.
\end{proof}
\subsection{Point WOS}%
\label{pwos}
The WOS method for approximating the solution $u(\vec x)$ of \cref{main} at  a single $\vec x\in D$ is based on the probabilistic representation~\eqref{kos}. The WOS  process $\vec x_n$ can be efficiently sampled as follows: let $\mathrm{Beta}(\alpha,\beta)$ denote the  family of Beta distributions,  $S^{d-1}$
denote the unit sphere in $\real^d$, and $\mathrm{U}(S)$ denote the
uniform distribution on a bounded measurable subset $S$ of $\real^d$. Choose
independent
samples
$\beta_n\sim \mathrm{Beta}(\alpha/2, (2-\alpha)/2)$ and  $\vec{\Theta}_n\sim \mathrm{U}(S^{d-1})$, and define
the iteration
\begin{equation}
	\vec x_{n+1}%
	= \vec x_n %  
	+ \vec \Theta_n%
	\frac{1}{\sqrt{\beta_n}}\,%
	d( \vec x_n),\qquad \vec x_0=\vec x,%
	\label{wos}
\end{equation}
where $d(\vec y)$ denotes the distance from $\vec y$ to the boundary of $D$.
Define the exit time $N^{\vec x}=\min\Bp{n\ge 0\colon \vec x_n\not\in D}$.
For independent samples $S_k\sim
	\mathrm{U}([0,1])$ and $\vec \Phi_k \sim \mathrm{U}(S^{d-1})$, let
\begin{equation}
	v(\vec x)%
	\coloneq g( \vec x_{N^{\vec x}})%
	+ \sum_{k=0}^{N^{\vec x}-1} F(\vec x_k; S_k, \vec \Phi_k)\label{XX}
\end{equation}
where
\[
	F(\vec x_k; S_k, \vec \Phi_k) \coloneq A_1\, d(\vec x_k)^\alpha  \bp{\pp{ f(\vec x_k+d(\vec x_k) S_k^{1/\alpha} \vec \Phi_k) -f(\vec x_k)} \prob{\beta_k<1-S_k^{2/\alpha}}+A_2\, f(\vec x_k)}%%
\]
for
\[
	A_1 \coloneq \frac{1}\alpha 2^{-\alpha+1} \Gamma(\alpha/2)^{-2} B((2-\alpha)/2, \alpha/2), \qquad A_2 \coloneq\int_0^1 \prob{\beta_n<1-z^{2/\alpha}}\,dz.%%%%
\]
To understand the relation to \cref{kos}, note that $\vec \Theta/\sqrt{\beta}$
has the same law as the exit distribution of the $\alpha$-stable Levy
process from a unit ball for $\beta\sim\mathrm{Beta}(\alpha/2,
	(2-\alpha)/2)$ and $\vec\Theta\sim \mathrm{U}(S^{d-1})$. The expectation
of
$A_1\, d
	(\vec x_k)^\alpha\,( f(\vec x_k+d(\vec x_k)\, S_k^{1/\alpha}\, \vec \Phi_k)$ equals
\[
	\mean{r_n^\alpha %
		\int_0^{\norm{\vec y}^{-2}-1}%
		f(\vec  x_n+r_n\vec y)\, V_1(\vec y)\,d\vec y}.
\]
The adjustment in the definition of $F$ is for computational
efficiency: we may precompute $A_2$ (with
quadrature methods), so that the term $\pp{ f(\vec x_k+d
	(\vec x_k) S_k^{1/\alpha} \vec \Phi_k) -f(\vec x_k)}$ has
lower variance, thereby yielding more easily to Monte Carlo
approximation. To approximate $u(\vec x)$, we generate $M$
\iid samples $v^j(\vec x)$ of $v(\vec x)$ and evaluate the sample mean $\frac
	1M\sum_{j=1}^M v^j(\vec x)$. The method is unbiased so that the sample
mean converges  to the exact solution $u(\vec x)$ as $M\to \infty$ and the error is described via the central-limit theorem. See~\cite{kos} for further details.

\section{WOS field solve}\label{field}
Rather than evaluate $u(\vec x)$ at a single point, we are interested in the whole field $u\in L^2(D)$.
The basic idea is to generate point estimates for a set of points  and
use interpolation to define an approximate $u\in L^2(D)$.  We examine the error when independent samples are used at each point, and then look at multilevel Monte Carlo as a method for improving efficiency by coupling point samples.
\subsection{WOS field error: independent samples}
What errors result when we generate WOS approximations to $u(\vec z^j)$ for a
set of points $\{\vec z^j\}\subset D$ and use an interpolant to approximate
$u$ in $L^2(D)$? We answer this question for $\{\vec z^j\}$ given by the
vertices of a triangular mesh for $D$ for the root-mean-square error
(the $L^2(\Omega,L^2(D))$
norm).

We work in two dimensions ($d=2$) and consider a shape-regular triangulation
$
	\mathcal{T}^h$ of a domain $D\subset
	\real^2$ with mesh-width $h$. That is, $D$ is the union of non-intersecting
triangles $\tau\in \mathcal{T}^h$ and the mesh-width $h=\max h_\tau$ for
$h_\tau$ equal to the longest edge length of $\tau\in
	\mathcal{T}^h$. Here, the shape-regular condition says there exists $C$ such
that $\area(\tau) \ge C h_\tau^d$ for all $\tau\in
	\mathcal{T}^h$. Let $I_h$ denote the interpolation operator taking values at
the vertices of the triangulation $\mathcal{T}^h$ to the piecewise-linear
interpolant. Denote the vertices of $\mathcal{T}^h$ by $\vec z^1,\dots,\vec
	z^N$. The next lemma gives a bound on the $L^2(D)$ bias error in approximating
$u$ by an interpolant with vertex data given  by a
random vector $\vec a$.

\begin{lemma}
	\label[lemma]{mf}
	Suppose  that $D$ is a bounded  polygonal domain in $\real^2$.
	% %
	Suppose that $g\colon D^{\coarse}\to \real$ is bounded and that   $f\in C^r(D)$ for $r= 2-\alpha$.
	Let $\vec a$ be an $\real^N$-valued random variable such that $\mean{\vec a}=[u(\vec z^1),\dots,u(\vec z^N)]$,  where $\vec z^i$ are the vertices of a shape-regular triangulation with mesh-width $h$. Then,
	for some constant $C(\alpha,D)>0$,
	\[
		\norm{u-\mean{I_h \vec a}}_{L^2(D)}%
		\le C(\alpha,D)\,h^{2}\, \pp{\norm{g}_\infty + \norm{f}_{C^r}}.
	\]
\end{lemma}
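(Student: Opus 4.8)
The plan is to exploit the key observation that $\mathbb{E}[\vec a]$ equals the vector of exact nodal values $[u(\vec z^1),\dots,u(\vec z^N)]$, so that $\mathbb{E}[I_h\vec a] = I_h[u(\vec z^1),\dots,u(\vec z^N)]$ is precisely the piecewise-linear interpolant of the true solution $u$ at the vertices of $\mathcal{T}^h$. Indeed, $I_h$ is a (deterministic) linear operator and $\mathbb{E}$ commutes with it, so $\mathbb{E}[I_h\vec a] = I_h\,\mathbb{E}[\vec a] = I_h u|_{\{\vec z^i\}}$. Thus the quantity to bound, $\|u - \mathbb{E}[I_h\vec a]\|_{L^2(D)}$, is just the standard $L^2$ interpolation error $\|u - I_h u\|_{L^2(D)}$, and the randomness in $\vec a$ plays no further role in this (bias-only) estimate.

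The second step is to invoke classical finite-element interpolation theory on a shape-regular triangulation in $\real^2$. The standard estimate is $\|u - I_h u\|_{L^2(\tau)} \le C\,h_\tau^2\,|u|_{H^2(\tau)}$ on each triangle $\tau$, with $C$ depending only on the shape-regularity constant; summing over $\tau\in\mathcal{T}^h$ and using $h_\tau\le h$ gives $\|u - I_h u\|_{L^2(D)} \le C\,h^2\,|u|_{H^2(D)}$. Since we only have H\"older rather than Sobolev regularity to hand, I would instead use the $C^{0,s}$-based version: for $u\in C^{2}(\overline D)$ (or even $C^{1+s}$), one has the pointwise bound $|u(\vec x) - (I_h u)(\vec x)| \le C\,h^{2}\,\|u\|_{C^2(\tau)}$ for $\vec x\in\tau$, because linear interpolation is exact on affine functions and the Taylor remainder on a triangle of diameter $\le h$ is controlled by the second derivatives. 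Integrating the square over $D$ and using $\area(D)<\infty$ yields $\|u - I_h u\|_{L^2(D)} \le C(D)\,h^{2}\,\|u\|_{C^2(\overline D)}$.

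The third step connects $\|u\|_{C^2(\overline D)}$ to the data. Here I apply \cref{thm:eur}: with $f\in C^r(\overline D)$ for $r = 2-\alpha$, we have $r + \alpha = 2$, so the theorem gives $u\in C^{2}(\overline D)$ (strictly, $C^{r+\alpha}$, which is $C^2$ when $2-\alpha$ is taken as the integer part plus H\"older exponent — one should be slightly careful when $2-\alpha$ is itself an integer, i.e.\ $\alpha=1$, and otherwise read $r$ as $\lfloor 2-\alpha\rfloor$ with H\"older remainder, but in all cases the norm bound $\|u\|_{C^{2}}\lesssim \|u\|_{C^{r+\alpha}} \le C(\alpha,D)(\|g\|_\infty + \|f\|_{C^r})$ holds since $D$ is bounded). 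Combining the three steps gives
\[
	\|u - \mathbb{E}[I_h\vec a]\|_{L^2(D)} = \|u - I_h u\|_{L^2(D)} \le C(D)\,h^2\,\|u\|_{C^2(\overline D)} \le C(\alpha,D)\,h^2\,\bp{\|g\|_\infty + \|f\|_{C^r}},
\]
which is the claimed bound.

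The main obstacle is bookkeeping around the regularity index rather than anything deep: one must check that $r = 2-\alpha$ genuinely delivers enough smoothness for the second-order interpolation estimate for every $\alpha\in(0,2)$, matching the H\"older scale $C^{r+\alpha}$ of \cref{thm:eur} against the $C^2$ (or $H^2$) hypothesis of the interpolation bound, and absorb the shape-regularity constant and $\area(D)$ into $C(\alpha,D)$. The probabilistic content is essentially trivial — only linearity of expectation and the unbiasedness hypothesis $\mathbb{E}[\vec a] = [u(\vec z^i)]$ are used — so the proof is short once the deterministic interpolation estimate is set up correctly.
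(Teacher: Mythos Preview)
Your proposal is correct and follows essentially the same approach as the paper: reduce to the deterministic interpolation error via linearity of expectation and the unbiasedness hypothesis, then combine a standard second-order interpolation estimate with the $C^{r+\alpha}=C^2$ regularity from \cref{thm:eur}. The only cosmetic difference is that the paper phrases the interpolation step via the Bramble--Hilbert lemma in $H^2(D)$ (using $C^2\hookrightarrow H^2$ on the bounded domain), whereas you work with a $C^2$ pointwise Taylor-remainder bound; both routes yield the same $h^2$ factor and the same dependence on the data.
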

\begin{proof}
	As $\mean{\vec a}=[u(\vec z^1),\dots,u(\vec z^N)]$ and $\mean{I_h\vec a}=
		I_h\mean{\vec a}$, the functions $u$ and $\mean{I_h
			\vec{a}}$ agree at the vertices $\vec z^j$. The Bramble--Hilbert
	lemma (e.g.,~\cite[Theorem 4.4.20]{Brenner2008-jg}) together with the regularity given by
	\cref{thm:eur} (for $r+\alpha=2$ so that $u\in H^2(D)$) imply the result.
\end{proof}

Denoting $s_h\coloneq I_h \vec a$, the lemma says that the expectation of
the interpolant $s_h$ is close to  $u$ in the $L^2(D)$ sense, for a fine
triangulation (small mesh-width $h$). We now look at the sample errors due to
the WOS Monte Carlo method. Let $\norm{\cdot}$ denote the Euclidean distance and $\norm{\cdot}_{\mathrm{F}}$ denote the matrix Frobenius norm.
%%%%%%%%%%%%%%%%%%%%%%%%%%%%%%%%%%%%
\begin{lemma}
	\label[lemma]{wn} Let $a_i$ equal the sample average of  $M$ \iid
	WOS samples of $v(\vec z^i)$ (defined in \cref{XX}), and  $\vec s_h\coloneq
		[s_h(\vec
			z^1),\dots,s_h(\vec z^N)]$ for $s_h=I_h \vec a$. Suppose that $\var( v(\vec
		z_i))\le C$, for a
	constant $C$. Then
	\[
		\mean{\frac 1N\norm{\vec s_h-\mean{\vec s_h}}^2}
		\le C\frac1M.
	\]
\end{lemma}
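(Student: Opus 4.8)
The plan is to reduce the $\frac1N\norm{\cdot}^2$ quantity, which is a normalized Frobenius-type norm over the vertex values, to a vertex-by-vertex variance estimate, and then invoke the standard Monte Carlo variance bound together with the boundedness of $I_h$ on the relevant discrete space. First I would write out $\vec s_h = I_h \vec a$ and $\mean{\vec s_h} = I_h \mean{\vec a}$, so that $\vec s_h - \mean{\vec s_h} = I_h(\vec a - \mean{\vec a})$. Since $I_h$ is linear, the key is that the map from nodal data to the vector of nodal values of the piecewise-linear interpolant is the identity: evaluating $I_h \vec a$ at the vertices $\vec z^j$ just returns $\vec a$. Hence $\vec s_h - \mean{\vec s_h}$, as a vector of nodal values, is exactly $\vec a - \mean{\vec a}$, and $\norm{\vec s_h - \mean{\vec s_h}}^2 = \sum_{i=1}^N (a_i - \mean{a_i})^2$.

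Next I would take expectations and use linearity: $\mean{\norm{\vec s_h-\mean{\vec s_h}}^2} = \sum_{i=1}^N \var(a_i)$. Each $a_i$ is the average of $M$ i.i.d.\ copies of $v(\vec z^i)$, so $\var(a_i) = \var(v(\vec z^i))/M \le C/M$ by hypothesis. Summing over the $N$ vertices gives $\mean{\norm{\vec s_h-\mean{\vec s_h}}^2} \le NC/M$, and dividing by $N$ yields the claimed bound $\mean{\frac1N\norm{\vec s_h-\mean{\vec s_h}}^2} \le C/M$.

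This argument is essentially routine once the identification of $\vec s_h - \mean{\vec s_h}$ with $\vec a - \mean{\vec a}$ at the nodes is made; the only point requiring a little care is that the WOS samples used to form different $a_i$ may or may not be independent across $i$, but since we only need the diagonal terms $\var(a_i)$ — the expectation of a sum of squares splits without cross terms — no independence across vertices is needed. The main (mild) obstacle is simply being careful that the normalization by $N$ is what makes the bound $h$-independent: the raw Frobenius norm grows like $N \sim h^{-2}$, and it is the averaged quantity $\frac1N\norm{\cdot}^2$ that is controlled uniformly in the mesh. I would also note in passing that the hypothesis $\var(v(\vec z^i)) \le C$ uniformly in $i$ is what the subsequent analysis (and the boundedness statements from \cref{thm:eur}) is meant to supply.
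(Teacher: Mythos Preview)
Your proof is correct and follows essentially the same route as the paper: identify $\vec s_h-\mean{\vec s_h}$ with $\vec a-\mean{\vec a}$ at the vertices, expand the expected squared norm as $\sum_i \var(a_i)$, and bound each term by $C/M$. Your observation that no independence across vertices is needed (since only the diagonal variance terms appear) is a slight sharpening of the paper's one-line argument, which phrases the same computation via $\cov(\vec a)$ being diagonal.
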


%%%%%%%%%%%%%%%%%%%%%%%%%%%%
\begin{proof}
	$\mean{\norm{\vec s_h-\mean{\vec s_h}}^2}%
		=\norm{\cov(\vec a)}_{\mathrm{F}}$, where $\cov(\vec a)$ is diagonal with
	entries $\var( v(\vec z^i) )/M$. As there are $N$ points $\vec z^i$ and $\var (v(\vec z^i))\le C$, the result holds.
\end{proof}
%%%%%%%%%%%%%%%%%%%
We combine the two estimates, to give an $L^2$-bound in physical and probability space on the approximation error.

\begin{theorem}
	\label{3.3} Suppose that $D$ is a bounded  polygonal
	domain in two dimensions. Suppose that $g\colon D^{\coarse}\to\real$ is bounded
	and that
	\[
		\int_{D^{\coarse}} \frac{ g(\vec x)^2}{1+\norm{\vec x}^{\alpha+d}}\,d\vec x<\infty,
	\]
	and that  $f\in C^r(D)$ for some $r>\alpha$. There exists $C(\alpha,D)>0$ such
	that
	\[
		\norm{u-s_h}_{L^2(\Omega,L^2(D))}%
		\le  C(\alpha,D) \bp{h^{2} \pp{\norm{g}_\infty+\norm{f}_{C^r}}%
			+  \frac{1}{\sqrt M}},
	\]
	where $s_h$ is the piecewise-linear interpolant on a shape-regular triangulation $\mathcal{T}^h$ of $D$ of  averages of $M$ \iid WOS samples at the vertices.
\end{theorem}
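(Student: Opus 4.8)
The plan is to combine \cref{mf} and \cref{wn} via the triangle inequality in $L^2(\Omega, L^2(D))$, splitting the error into a bias term controlled by \cref{mf} and a statistical term controlled by \cref{wn}. First I would write
\[
\norm{u - s_h}_{L^2(\Omega,L^2(D))}
\le \norm{u - \mean{s_h}}_{L^2(D)}
 + \norm{s_h - \mean{s_h}}_{L^2(\Omega,L^2(D))},
\]
using that $u - \mean{s_h}$ is deterministic. The first term is exactly the quantity bounded in \cref{mf} by $C(\alpha,D)\, h^2(\norm{g}_\infty + \norm{f}_{C^r})$, once I check the hypotheses: here $f\in C^r(D)$ with $r>\alpha$, which covers the case $r = 2-\alpha$ needed by \cref{mf} (indeed any $r>\alpha$ gives $u\in H^2(D)$ via \cref{thm:eur} since $r+\alpha > 2\alpha$, and one only needs $H^2$ regularity for the Bramble--Hilbert step; if $\alpha \ge 1$ one takes $r$ large enough that $r+\alpha\ge 2$, which is implied by $r>\alpha$ only when $\alpha\ge 1$, so strictly one should invoke \cref{mf} with the minimal admissible $r$ and note $\norm{f}_{C^{2-\alpha}}\le \norm{f}_{C^r}$).

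For the second term, I would pass from the continuous $L^2(D)$ norm of the piecewise-linear interpolant to the discrete $\ell^2$ norm of its vertex values. On a shape-regular triangulation in two dimensions, the standard finite-element norm equivalence gives $\norm{I_h \vec w}_{L^2(D)}^2 \le C h^2 \sum_i w_i^2 = C h^2\, N \cdot \tfrac1N\norm{\vec w}^2$ for any vector $\vec w$; applying this to $\vec w = \vec a - \mean{\vec a}$ (so $I_h\vec w = s_h - \mean{s_h}$) and taking expectations,
\[
\mean{\norm{s_h - \mean{s_h}}_{L^2(D)}^2}
\le C h^2 N\, \mean{\tfrac1N\norm{\vec s_h - \mean{\vec s_h}}^2}
\le C h^2 N \cdot \frac{C}{M},
\]
by \cref{wn}. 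Since the domain has fixed area and the triangulation is shape-regular, $N h^2$ is bounded by a constant $C(D)$ (each triangle has area $\ge c h_\tau^d$ but to bound $N$ one uses that the triangles partition $D$, so $N$ is at most $\area(D)$ divided by the minimal triangle area; combined with $h^2\ge h_\tau^2$ this needs a quasi-uniformity-type observation — this is the one point requiring care). Hence $\norm{s_h - \mean{s_h}}_{L^2(\Omega,L^2(D))} \le C(D)/\sqrt{M}$. I also need the hypothesis $\var(v(\vec z^i))\le C$ of \cref{wn}; this follows from \cref{thm:eur}-type bounds on $v$ under the stated $L^2$-integrability condition on $g$ and $f\in C^r$, $r>\alpha$ — I would cite \cite{kos} for the variance bound of the WOS estimator, or sketch it from boundedness of $u$, $f$, and moment bounds on $N^{\vec x}$.

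The main obstacle I anticipate is the bookkeeping around the interpolation constant and the factor $N$: one must ensure the product $Nh^2$ is controlled by $\area(D)$ times a shape-regularity constant, rather than growing with refinement, and that the $L^2$-to-$\ell^2$ equivalence constant is uniform over the mesh family. Both are standard consequences of shape-regularity in a fixed polygon, but they are the substantive inputs beyond the two preceding lemmas; everything else is the triangle inequality and matching the $r>\alpha$ hypothesis to the $r = 2-\alpha$ requirement of \cref{mf}.
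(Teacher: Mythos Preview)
Your proposal is correct and follows essentially the same route as the paper: the same bias--variance split, \cref{mf} for the deterministic term, and for the stochastic term the same passage from $L^2(D)$ to the discrete $\ell^2$ norm (the paper does this triangle-by-triangle rather than invoking an abstract norm equivalence, but the computation is identical), followed by \cref{wn} and the shape-regularity bound $Nh^2=\mathcal{O}(1)$, with the uniform variance bound on $v(\vec z^i)$ cited from \cite[Corollary~6.5]{kos}. Your caution about $Nh^2$ needing a quasi-uniformity-type input is well placed --- the paper invokes shape-regularity at that step without further comment --- and your remark on matching $r>\alpha$ to the $r=2-\alpha$ hypothesis of \cref{mf} is more explicit than the paper, which simply applies \cref{mf} directly.
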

%%%%%%%%%%%%%%%%%%%%%%%%%%%%%%%%%%%%%%%%%%

\begin{proof}
	Write $u-s_h=(\mean{s_h}-s_h) +(u-\mean{s_h})$. The term $(\mean{s_h}-s_h)$ is the deviation of the interpolant $s_h$ from its mean. For linear interpolants,
	\begin{align*}
		\norm{\mean{s_h}-s_h}^2_{L^2(\Omega,L^2(D))} %
		 & = %
		\sum_{\tau \in \mathcal{T}^h}%
		\mean{    \norm{\mean{s_h}-s_h}^2_{L^2(\tau)}} %
		\le \max_{\tau \in \mathcal{T}^h}\area(\tau)%
		\norm{\mean{\vec s_h}-\vec s_h}^2.%
	\end{align*}
	The area $\area(\tau)$ of each triangle is uniformly bounded
	by $h^d$, as $h$ is the maximum edge length of any $\tau$.
	Further,~\cite[Corollary 6.5]{kos} implies that the variance of
	$v(\vec z^i)$ is bounded uniformly over $\vec z^i\in D$. Hence, for a
	constant $C$, \cref{wn} implies that
	\[
		\norm{\mean{s_h}-s_h}^2_{L^2(\Omega,L^2(D))} % 
		\le C\, h^d\,%
		\frac{N}{M}.
	\]
	As the number of vertices $N\le 3 \area(D)/\min_\tau\area(\tau) =
		\order{1/h^d}$ by the
	shape-regular condition. Then, for a possibly larger constant $C$,
	\[
		\mean{    \norm{\mean{s_h}-s_h}^2_{L^2(D)} }%
		\le C\frac{1}{M}.
	\]
	The second term $u-\mean{s_h}$ is described by \cref{mf} and
	\[
		\norm{u-\mean{s_h}}_{L^2(D)}
		\le C(\alpha,D)\, h^{2}\,\pp{\norm{g}_\infty + \norm{f}_{C^r}\strutB}.
	\]
	Together the  two inequalities give that,
	for a possibly larger constant $C(\alpha,D)$,
	\begin{align*}
		\norm{u-s_h}_{L^2(\Omega,L^2(D))}%
		 & \le C(\alpha,D) \,\pp{h^{2} \,\pp{\norm{g}_\infty+\norm{f}_{C^r}\strutB}
			+  \frac{1}{\sqrt M}
		}.\qedhere
	\end{align*}

\end{proof}
For accuracy $\varepsilon$, the required number of samples grows like $1/\varepsilon^2$. The required mesh-width  $h=\order{\sqrt{\varepsilon}}$  and the number of vertices grows like $1/h^d$. Hence, the required number of vertices grows like $\varepsilon^{-d/2}$  and the total work required is $\sorder{\varepsilon^{-(2+d/2)}}$. Thus, even for $d=2$, the WOS method with independent samples requires $\order{\varepsilon^{-3}}$ work.

\subsection{Multilevel Monte Carlo}
The multilevel Monte Carlo (MLMC) method offers a practical way to reduce
computational effort in Monte Carlo runs. The idea is to introduce  nested
triangular meshes with vertices $\vec z^i_\ell$ for
$i=1,\dots,N_\ell$ on level $\ell$, and define  $v_\ell\in L^2(D)$  based on WOS approximations at $\vec z^1_\ell,\dots,\vec z^{N_\ell}_\ell$. Then,
\[
	\mean{v_\ell}=\mean{v_1}% 
	+\sum_{j=1}^{\ell-1} \mean{v_{j+1}-v_j}.
\]
We show how to choose the nested triangular meshes and coupling between
samples so that
the \(v_{j+1} -v_j\)  have small variances, which allows reduced computation
time for
a given accuracy level.  Giles' complexity theorem describes the relationship
between the work required and the coupling, number of samples, and errors.

\begin{theorem}[MLMC complexity theorem]%
	\label[theorem]{ct}%
	Let $u\in L^2(D)$ and  $v_\ell$ be $L^2(D)$-valued random variables
	for $\ell=\ell_0,\ell_0+1,\ell_0+2,\dots$.
	For constants $c_i$ and $a,\beta,\gamma$, suppose that the following  hold for
	$\ell=\ell_0,\ell_0+1,\dots$.
	\begin{description}
		\item[Consistency condition] $\norm{\mean{ v_\ell}-u}_{L^2(D)}\le c_1\,
			      2^{-a \ell}$.
		\item[Coupling condition]  We have $L^2(D)$-valued random variables $v_{\ell+1}^{\fine}$
		      and $v_\ell^{\coarse}$ (the super-scripts denote fine and coarse) equal
		      in distribution to $v_\ell$ that are coupled in the sense that
		      \[
			      V_\ell \coloneq \mean {\norm{ v^{\fine}_{\ell+1}-v^{\coarse}_\ell -\mean{v^{\fine}_{\ell+1}-v^{\coarse}_\ell}}_{L^2(D)}^2}  \le c_2 \,2^{-\beta \ell}.%%
		      \]
		      The random variables $v_\ell^{\fine}$ and $v_\ell^{\coarse}$ are independent.
		\item[Complexity condition] The cost $C_\ell$ of computing one level-$\ell$
		      sample (i.e., of $v_1$ or $v_\ell^{\coarse}$ or $v_\ell^{\fine}$)  is bounded by $c_3\, 2^{\gamma\ell}$.
	\end{description}

	For tolerance $\varepsilon$, choose the smallest number of levels $L$ such that $c_1\, 2^{-a L}\le
		\varepsilon/2$ and choose the number of  samples on level $\ell$ as
	\[
		M_\ell=\ceil*{2 \epsilon^{-2} \sqrt{C_\ell/V_\ell}\sum_
			{\ell=\ell_0}^L\sqrt{V_\ell C_\ell}},\qquad \ell=\ell_0,\ell_0+1,\dots,L-1.
	\]
	For \iid samples $(v^i_1, v_\ell^{\fine,i}, v^{\coarse,i}_\ell)$ of $(v_1,
		v^{\fine}_\ell,
		v^{\coarse}_\ell)$, define the level-$L$ MLMC approximation
	\begin{equation}
		v^{\mathrm{ML}}%
		= \frac{1}{M_0}\sum_{i=1}^{M_0} v^i_{\ell_0} %
		+ \sum_{j=\ell_0}^{L-1}%
		\frac1{M_j}%
		\sum_{i=1}^{M_j} %
		(v^{\fine,i}_{j+1}-v^{\coarse,i}_{j}).
		\label{vML}
	\end{equation}
	Suppose that $a>1/2$ and $\beta<\gamma$.   We achieve $
		\mean{\norm{u-v^
		{\mathrm{ML}}}^2_{L^2(D)}}\le \varepsilon^2$ with $\sorder{\epsilon^{-2-(\gamma-\beta)/a}}$ work.
\end{theorem}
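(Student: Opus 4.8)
The plan is to run Giles' classical proof of the multilevel complexity theorem, observing first that its passage to $L^2(D)$-valued quantities is free: the only probabilistic facts used are the orthogonal splitting of mean-square error into squared bias plus estimator variance, and additivity of variance over independent summands, and both hold verbatim for square-integrable $L^2(D)$-valued random variables with $\mean{\cdot}$ read as a Bochner integral. So one starts from
\[
	\mean{\norm{u-v^{\mathrm{ML}}}_{L^2(D)}^2}
	= \norm{u-\mean{v^{\mathrm{ML}}}}_{L^2(D)}^2
	+ \mean{\norm{v^{\mathrm{ML}}-\mean{v^{\mathrm{ML}}}}_{L^2(D)}^2}
\]
and bounds the squared bias by $\varepsilon^2/4$ and the estimator variance by $\varepsilon^2/2$.

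For the bias, the telescoping form of \cref{vML} gives $\mean{v^{\mathrm{ML}}}=\mean{v_{\ell_0}}+\sum_{j=\ell_0}^{L-1}\mean{v_{j+1}-v_j}=\mean{v_L}$, so the consistency condition and the defining choice of $L$ (least integer with $c_1 2^{-aL}\le\varepsilon/2$) give $\norm{u-\mean{v^{\mathrm{ML}}}}_{L^2(D)}\le c_1 2^{-aL}\le\varepsilon/2$. For the variance, independence of the level samples and of $v^{\fine}$ from $v^{\coarse}$ makes the estimator variance the sum over levels of per-level variances divided by the $M_\ell$, each numerator controlled by $\var(v_{\ell_0})$ at the base and by the coupling condition above it; plugging in the prescribed $M_\ell$ and using $\lceil x\rceil\ge x$, each such term is at most $(\varepsilon^2/2)\sqrt{V_\ell C_\ell}\big/\sum_{\ell'}\sqrt{V_{\ell'}C_{\ell'}}$, so the sum is $\varepsilon^2/2$. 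Adding the two bounds gives mean-square error at most $\varepsilon^2$.

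For the work, $W=\sum_{\ell=\ell_0}^{L-1}M_\ell C_\ell\le\sum_\ell C_\ell+2\varepsilon^{-2}\bigl(\sum_\ell\sqrt{V_\ell C_\ell}\bigr)^2$, the first sum from the ceilings and the second from the definition of $M_\ell$. With $S\coloneq\sum_{\ell=\ell_0}^{L}\sqrt{V_\ell C_\ell}$, the coupling and complexity conditions make each summand at most a constant times $2^{(\gamma-\beta)\ell/2}$, an increasing geometric sequence because $\gamma>\beta$, so $S=\order{2^{(\gamma-\beta)L/2}}$ and, likewise, $\sum_\ell C_\ell=\order{2^{\gamma L}}$. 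The choice of $L$ forces $2^{L}=\order{\varepsilon^{-1/a}}$, hence $2\varepsilon^{-2}S^2=\order{\varepsilon^{-2}2^{(\gamma-\beta)L}}=\order{\varepsilon^{-2-(\gamma-\beta)/a}}$, the claimed rate. I expect the only real work to be this cost accounting: tracking which geometric sums are dominated by their first versus last term, and checking that the unavoidable one-sample-per-level overhead $\sum_\ell C_\ell=\order{\varepsilon^{-\gamma/a}}$ does not dominate — which needs the standard side-condition $2a\ge\beta$ and is where $\beta<\gamma$ enters; everything else is word-for-word the scalar theorem with $\norm{\cdot}_{L^2(D)}$ in place of $\abs{\cdot}$.
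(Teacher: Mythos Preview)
Your sketch is correct and is essentially the argument the paper defers to: the paper's own proof is a two-sentence citation of \cite{Giles2015-fj}, noting only that the Hilbert-space-valued case is covered there and that the statement is specialised to $a>1/2$, $\beta<\gamma$. You have simply written out what that citation contains, and the passage to $L^2(D)$-valued quantities via the Bochner-integral bias--variance split is exactly the point.

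Two small remarks. First, the displayed formula for $M_\ell$ in the statement has $\sqrt{C_\ell/V_\ell}$ where the standard (and correct) choice is $\sqrt{V_\ell/C_\ell}$; your variance computation tacitly uses the latter, which is what actually makes $\sum_\ell V_\ell/M_\ell\le\varepsilon^2/2$. Second, you are right that the overhead bound $\sum_\ell C_\ell=\order{\varepsilon^{-\gamma/a}}$ being dominated by $\varepsilon^{-2-(\gamma-\beta)/a}$ requires $2a\ge\beta$, which is Giles' hypothesis but is not literally implied by the paper's stated $a>1/2$; in the paper's application $a=2$ and $\beta<1$, so there is no issue.
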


\begin{proof}
	It is key for this application of the complexity theorem that the quantities
	of interest are Hilbert-space valued, which is described
	in~\cite[Section 2.5]{Giles2015-fj}. We have stated the result  only for
	the
	case $a>1/2$ and $\beta<\gamma$, which is most relevant for our
	application.
\end{proof}
\subsection{Nested triangulations}\label{triangulation}
We set-up the triangulations for MLMC WOS sampling. Consider
a polygonal domain
$D\subset \real^2$ and a triangulation $\Bp{\tau_1^k}$ of $D$ with mesh-width
$h$. Let $\vec
	z_1^j$ for $j=1,\dots,N_1$ denote the vertices of the triangles. Define a
refinement by dividing each triangle into four equal parts to define a new
triangulation $\{\tau_2^k\}$ and set of vertices $\vec z_2^j$. Continue
recursively to define the vertices $\vec z_\ell^j$ at level $\ell$ of the
shape-regular triangulations $\Bp{\tau_\ell^k}$ with mesh width $2^{1-k}h$.
Denote the number of vertices on level $\ell$ by $N_\ell$. Let $I_\ell\colon
	\real^{N_\ell}\to L^2(D)$ denote the piecewise-linear interpolation operator,
which sends values at vertices $\vec z_\ell^j$ to the piecewise-linear
interpolant on the level-$\ell$ triangulation.

In the following, we will be
interested in evaluating the $L^2(D)$ norm of piecewise-linear functions. This
can be done exactly by choosing a cubature rule $Q_\tau(\phi)\approx \int_\tau
	\phi(\vec x)\,d\vec x$ of degree-of-precision two on the triangle $\tau$, and
computing
\[
	\norm{\phi}_{L^2(D)}%
	=\pp{\sum_{k} Q_{\tau_\ell^k}(\phi^2)}^{1/2},
\]
which is exact for piecewise-linear functions on the level-$\ell$ triangulation.
In two dimensions, a suitable $Q_\tau$ is defined by
\[
	Q_\tau(\phi) =\frac 13\operatorname{Area}(\tau) \bp{ \phi(\vec m_1)+\phi(\vec m_2)+\phi(\vec m_3)\strutB},%%
\]
where $\vec m_i$ are the midpoints of the edges of $\tau$. This means, in particular, we can write, for a piecewise-linear function $\phi$ at level-$\ell$,
\begin{equation}
	\norm{\phi}_{L^2(D)}%
	= \pp{\sum_{k}\area(\tau_\ell^k) \sum_{i,j=1}^3 a_{ij}
		\phi(\vec z_i)\phi(\vec z_j)}^{1/2},
	\label{later}
\end{equation}
where $\vec z_i$ are vertices of $\tau_\ell^k$ and the coefficients $a_{ij}>0$
(independent of level $\ell$ and triangle $\tau_\ell^k$).

To apply \cref{ct}, we require values for $a,\beta,\gamma$. At each level,
the number of points increases by a factor of $2^d$ (in dimension $d$) at
most and  $\gamma=d$. From \cref{mf},
it is clear that $a=2$. In the next two sections, we describe precisely the
coupling and give a lower bound on the value of $\beta$, which describes the
strength of the coupling.

\subsection{Coupling WOS processes}
In preparation for describing the coupling in the MLMC WOS field solver,
we study the behaviour of two WOS processes $\vec x_n$ and $\vec y_n$ generated
by the same random variables. That is,
\begin{equation}
	\vec x_{n+1}%
	=\vec x_n + d(\vec x_n)\,\vec \Theta_n\,\frac{1}{\sqrt{\beta_n}},\qquad
	\vec y_{n+1}%
	=\vec y_n + d(\vec y_n)\,\vec \Theta_n\,\frac{1}{\sqrt{\beta_n}},\label{snow}
\end{equation}
for independent samples $\vec \Theta_n\sim \mathrm{U}(S^{d-1})$ and $\beta_n\sim
	\mathrm{Beta}(\alpha/2,1-\alpha/2)$. The first result describes how the
distance $r_n\coloneq \norm{\vec x_n-\vec y_n}$ depends on $n$ and relies
on the following assumption.
%
% \begin{assumption}
% 	\label[assumption]{assarse}
% 	Suppose that, for some $\lambda<1$ and $\mu>0$,
% 	\[
% 		\mean{(1+2 \beta^{-1/2}\cos\theta +\beta^{-1})^{\mu \alpha/2}\,1_ {\Bp{\vec
% 			x_{1}\in D}} \,|\,\vec x_0}
% 		\le \lambda\,1_{\Bp{\vec x_0\in D}},
% 	\]
% 	for $\vec x_{1}=\vec x+\vec \Theta \,d(\vec x)/\sqrt{\beta}$,
% 	$\beta\sim \mathrm{Beta}(\alpha/2,1-\alpha/2)$, and $\theta\sim
% 	\mathrm{U}(0,2\pi)$.
% \end{assumption}
%
\begin{assumption}
	\label[assumption]{hole}
	Suppose, for some $\lambda<1$ and $\mu>0$, that
	\[
		\mean{\pp{\frac{\norm{\vec x_1-\vec y_1}}{\norm{\vec x_0-\vec
					y_0}}}^{\mu\alpha}\,1_{\Bp{\vec x_1,\vec y_1\in D}}\,|\,\vec x_0,\vec
		y_0} \le \lambda,\qquad \text{for all $\vec x_0,\vec y_0\in D$},
	\]
	for $\vec x_1=\vec x_0+d(\vec x_0)\,\vec {\Theta}/\sqrt{\beta}$ and
	$\vec y_1=\vec y_0+d(\vec y_0)\,\vec {\Theta}/\sqrt{\beta}$, where
	$\beta\sim \mathrm{Beta}(\alpha/2,1-\alpha/2)$, and $\vec\Theta\sim
		\mathrm{U}(S^{d-1})$.
\end{assumption}
We verify
the assumption holds for a large range of $\alpha$ in
\cref{app}, by computing the expectation numerically
for $B=[0,1]^2$; it is seen to hold  with $\mu=1$ for $\alpha\le 1$ and with $\mu\approx
	0.5$ for $\alpha\le 1.8$.

\begin{lemma}
	\label[lemma]{prep}%
	Consider a domain $D$ in two dimensions.
	Let $\vec x_n,\vec y_n$ be
	coupled WOS processes (as defined by \cref{snow}) and  write
	$\norm{\vec x_n-\vec y_n}\eqcolon r_n$.
	If  \cref{hole} holds for some $\mu\le 1$, there exists $C>0$ such that,
	for $n\ge 1$,
	\begin{equation}
		\mean{r_{n+1}^{\mu\alpha}\,%
			1_{\Bp{\vec x_k,\vec y_k\in D \colon k=1,\dots,n} } \,|\,\vec x_0,\vec y_0}%
		\le C\,%
		\lambda^n\,%
		r_0^{\mu\alpha},\qquad%
		\vec x_0,\vec y_0\in D.
	\end{equation}
\end{lemma}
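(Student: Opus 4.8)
The plan is to iterate Assumption~\ref{hole} using the tower property of conditional expectation, peeling off one WOS step at a time from the outside. Write $E_n$ for the event $\{\vec x_k,\vec y_k\in D\colon k=1,\dots,n\}$, and note that $E_n = E_{n-1}\cap\{\vec x_n,\vec y_n\in D\}$. The key observation is that, conditional on $\vec x_{n-1},\vec y_{n-1}$, the pair $(\vec x_n,\vec y_n)$ is obtained from $(\vec x_{n-1},\vec y_{n-1})$ by exactly the one-step map in Assumption~\ref{hole} (the WOS steps are driven by i.i.d.\ copies of $(\vec\Theta,\beta)$). So I would condition on $\mathcal F_{n-1}$ (the $\sigma$-algebra generated by the first $n-1$ steps), use that $E_{n-1}$ is $\mathcal F_{n-1}$-measurable, and apply the assumption to the ratio $r_n/r_{n-1}$:
\[
\mean{r_{n+1}^{\mu\alpha}\,1_{E_{n+1}}\mid \mathcal F_n}
= r_n^{\mu\alpha}\,1_{E_n}\,\mean{\pp{\frac{r_{n+1}}{r_n}}^{\mu\alpha}1_{\{\vec x_{n+1},\vec y_{n+1}\in D\}}\mid \vec x_n,\vec y_n}
\le \lambda\, r_n^{\mu\alpha}\,1_{E_n}.
\]
Taking expectations and iterating from $n+1$ down to $1$ gives $\mean{r_{n+1}^{\mu\alpha}1_{E_{n+1}}\mid\vec x_0,\vec y_0}\le \lambda^n\,\mean{r_1^{\mu\alpha}1_{\{\vec x_1,\vec y_1\in D\}}\mid \vec x_0,\vec y_0}$, and one final application of Assumption~\ref{hole} bounds the remaining expectation by $\lambda\, r_0^{\mu\alpha}$; absorbing constants gives the claim with $C=\lambda$ (or simply $C=1$ after relabelling the exponent of $\lambda$).

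There is a subtlety I would be careful about: Assumption~\ref{hole} is stated with the ratio $\norm{\vec x_1-\vec y_1}/\norm{\vec x_0-\vec y_0}$, which requires $\vec x_0\ne\vec y_0$; if $r_{n-1}=0$ then $r_n=0$ as well (both processes take the same step), so the inequality $\mean{r_n^{\mu\alpha}1_{\{\cdots\}}\mid\mathcal F_{n-1}}\le\lambda r_{n-1}^{\mu\alpha}$ holds trivially on $\{r_{n-1}=0\}$ with both sides zero. I would also note that one needs $\mu\le 1$ only to keep $\mu\alpha < 2$ so that the relevant moments exist and stay finite along the recursion — this is why the hypothesis $\mu\le 1$ appears. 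The indicator event shrinks as $n$ grows, $E_{n+1}\subseteq E_n$, which is exactly what makes the telescoping legitimate: restricting to the smaller event is automatically encoded once we write $1_{E_{n+1}}=1_{E_n}1_{\{\vec x_{n+1},\vec y_{n+1}\in D\}}$.

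The main obstacle is not really any single estimate — the heart of the argument is simply the tower-property iteration — but rather getting the measurability bookkeeping exactly right: confirming that conditioning on $(\vec x_n,\vec y_n)$ rather than on the full history $\mathcal F_n$ is legitimate in the inner expectation (it is, by the Markov structure of the coupled WOS iteration: the $(n+1)$-th step depends on the past only through $(\vec x_n,\vec y_n)$), and that the event $E_n$ factors out cleanly at each stage. Once that is in place, the geometric decay is immediate from Assumption~\ref{hole}.
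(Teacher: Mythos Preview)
Your tower-property iteration is correct and matches the second half of the paper's argument. However, there is a genuine gap: look carefully at the indicator in the lemma. The claim is about $\mean{r_{n+1}^{\mu\alpha}\,1_{E_n}\mid\vec x_0,\vec y_0}$, with the indicator running only over $k=1,\dots,n$; there is \emph{no} requirement that $\vec x_{n+1},\vec y_{n+1}\in D$. Your displayed computation bounds $\mean{r_{n+1}^{\mu\alpha}\,1_{E_{n+1}}\mid\vec x_0,\vec y_0}$ instead, which is the smaller quantity since $E_{n+1}\subset E_n$. This distinction matters downstream: in the proof of \cref{tcouplingrate} the bound is applied on the event $\{k=N^{\vec x}=N^{\vec y}\}$, where both paths have \emph{exited} $D$ at step $k$, so one cannot afford the extra indicator.

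The paper handles the final step separately. Because \cref{hole} carries the indicator $1_{\{\vec x_1,\vec y_1\in D\}}$, it cannot be applied to the $(n{+}1)$-th increment. Instead one uses the pathwise inequality $r_{n+1}\le r_n\,(1+2\beta_n^{-1/2}\cos\theta+\beta_n^{-1})^{1/2}$, which follows from $|d(\vec x_n)-d(\vec y_n)|\le r_n$, and takes the $\mu\alpha$-moment directly. That moment is finite precisely when $\mu\le 1$, since the $\mathrm{Beta}(\alpha/2,1-\alpha/2)$ density behaves like $\beta^{\alpha/2-1}$ near zero and the integrand contributes a factor $\beta^{-\mu\alpha/2}$. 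This constant is the $C$ in the lemma---it is not simply $1$ or $\lambda$ as you suggest---and this integrability is the actual reason for the hypothesis $\mu\le 1$, not the vague ``keep $\mu\alpha<2$'' you offer (indeed, within your pure iteration of \cref{hole} no moment condition is needed at all, since the assumption is taken as given).
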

\begin{proof}
	In two dimensions, we may write
	\[
		(\vec x_{n+1}-\vec y_{n+1})%
		=(\vec x_n -\vec y_n)+ \frac{d(\vec x_n)-d(\vec y_n)}{\sqrt {\beta_n}}\vec{\Theta},
		\qquad
		\vec{\Theta}=
		\hat{\vec r}_n \cos \theta + \hat{\vec r}_n^\perp \sin \theta,
	\]
	for some $\theta\in(-\pi,\pi]$, where $\hat{\vec r}$ is the unit vector
	in the direction $\vec x_n-\vec y_n$ and
	$\hat{\vec r}^\perp$ is a unit vector orthogonal to $\hat{\vec r}$.  Note
	that
	$d(\vec
		x)= d(\vec y+(\vec x-\vec y))\le\norm{\vec y-\vec z+(\vec x-\vec y)}\le
		d(\vec y)+\norm{\vec x-\vec y}$, by choosing $\vec z\in\partial D$ such
	that $\norm{\vec y-\vec z}=d(\vec y)$. Then, $\abs{d(\vec x_n)-d(\vec y_n)}\le
		r_n$ and
	\[
		r_{n+1}%
		\le r_n \pp{\pp{1 + \beta_n^{-1/2}\cos \theta}^2 + \beta_n^
			{-1}\sin^2 \theta }^{1/2}
		= r_n( 1 + 2 \beta_n^{-1/2}\cos \theta + \beta_n^{-1})^{1/2}.
	\]
	The pdf of the beta distribution
	$\text{Beta}(\alpha/2,(2-\alpha)/2)$ is
	\begin{align*}
		p(\beta)%
		 & =\frac1{B(\alpha/2,(2-\alpha)/2))}\, \beta^{\alpha/2-1} \,(1-\beta)^
		{-\alpha/2}                                                             \\%
		 & =\frac1 \pi \,{\sin(\pi\alpha/2)} \,\beta^{\alpha/2-1}\,(1-\beta)^
		{-\alpha/2},\qquad \beta\in(0,1).
	\end{align*}
	The term $( 1 + 2 \beta_n^{-1/2}\cos \theta + \beta_n^{-1})^{\mu \alpha/2}
		p(\beta_n)$ grows like $\beta_n^{-\mu \alpha/2}\beta_n ^{\alpha/2-1}$ as
	$\beta_n\downarrow 0$, and
	hence
	$\mean{( 1 + 2 \beta_n^{-1/2}\cos \theta + \beta_n^{-1})^{\mu \alpha/2}}=\infty$
	for $\mu> 1$. This explains the restriction to $\mu\le 1$.

	For $\mu\le 1$, let
	\[
		C\coloneq \mean{( 1 + 2 \beta_n^{-1/2}\cos \theta + \beta_n^{-1})^{\mu \alpha/2}\,
			p(\beta_n)}<\infty.
	\]
	Consequently,
	\[
		\mean{r^{\mu\alpha}_{n+1} \,|\, \vec
			x_n,\vec
			y_n}%
		\le C\, r_n^{\mu\alpha}\,1_{\Bp{\vec x_{k},\vec y_k\in D\colon k=1,\dots,n}}.
	\]
	To estimate the right-hand side in terms of the initial separation $r_0$, we use
	\cref{hole}, which provides a uniform bound on the $\mu\alpha$-moment of
	$r_{n+1}/r_n$
	given $\vec x_{n+1},\vec y_{n+1}\in D$,
	uniformly over any choice of $\vec x_n,\vec y_n \in D$ with separation
	$r_n$.
	This implies that
	\[
		\mean{ r_n^{\mu \alpha}\,1_{\Bp{\vec x_{k},\vec y_k\in D\colon k=1,\dots,n}}}
		\le \lambda\, \mean{ r_{n-1}^{\mu \alpha}\,1_{\Bp{\vec x_{k},\vec y_k\in D\colon
			k=1,\dots,n-1}}}.
	\]
	Iterating this, we achieve the result.
\end{proof}

We make precise the difficulty with the case $\mu>1$.
\begin{lemma}
	Let $D$ be the unit ball in two dimensions.	For all
	$\mu>1$ and
	$K>1$, there exists $\vec
		x_0,\vec y_0\in D$ such that
	\[
		\mean{r_{1}^{\mu\alpha}\,1_{\Bp{\vec x_1,\vec y_1\in D}}}\ge K\, r_0^{\mu\alpha}.
	\]
\end{lemma}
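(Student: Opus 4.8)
The plan is to exhibit explicit points $\vec x_0,\vec y_0 \in D$ near the boundary so that with positive probability the step moves them by an amount comparable to their separation $r_0$ while keeping both inside $D$, and then to show that the heavy tail of $\beta^{-1/2}$ forces the $\mu\alpha$-moment of $r_1$ to blow up when $\mu>1$. Concretely, I would fix a small $r_0>0$ and place $\vec x_0,\vec y_0$ along a common radius of the unit ball, say both within distance $O(r_0)$ of the boundary point $\vec e_1$, arranged so that $d(\vec x_0)$ and $d(\vec y_0)$ differ by a definite fraction of $r_0$ (e.g. $\abs{d(\vec x_0)-d(\vec y_0)} \ge c\, r_0$). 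Using the two-dimensional decomposition from the proof of \cref{prep}, $r_1 = \norm{\vec x_0 - \vec y_0 + (d(\vec x_0)-d(\vec y_0))\,\beta^{-1/2}\,\vec\Theta}$, so on the event that $\vec\Theta$ points roughly along $\hat{\vec r}_0$ (an event of probability bounded below independently of everything), $r_1 \ge c\, r_0\, \beta^{-1/2}$ once $\beta$ is small.

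The second step is to restrict to the event $\{\vec x_1,\vec y_1 \in D\}$. Both points start within $O(r_0)$ of the boundary, and the increment has length at most $(d(\vec x_0) \vee d(\vec y_0))\,\beta^{-1/2} = O(r_0)\,\beta^{-1/2}$; in the worst directions this can leave $D$, but one only needs a direction cone of positive angular measure (pointing tangentially/inward) for which, for all sufficiently small $\beta$ — say $\beta \le \beta_*$ for a fixed $\beta_*>0$ — both $\vec x_1$ and $\vec y_1$ remain in the unit ball. The curvature of $\partial D$ being bounded, a suitable cone of directions and threshold $\beta_*$ can be chosen uniformly in $r_0$ (after a final rescaling argument if needed, since everything is scale-covariant in $r_0$). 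On this good event $G$ one has $r_1^{\mu\alpha} \ge c^{\mu\alpha} r_0^{\mu\alpha} \beta^{-\mu\alpha/2}$.

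Finally I would lower-bound the expectation by integrating over $G$ alone:
\[
\mean{r_1^{\mu\alpha}\,1_{\{\vec x_1,\vec y_1 \in D\}}}
\ge c^{\mu\alpha} r_0^{\mu\alpha}\, \prob{\vec\Theta \in \text{cone}}\int_0^{\beta_*} \beta^{-\mu\alpha/2}\, p(\beta)\, d\beta,
\]
and since $p(\beta) \sim \pi^{-1}\sin(\pi\alpha/2)\,\beta^{\alpha/2-1}$ near $0$, the integrand behaves like $\beta^{-\mu\alpha/2 + \alpha/2 - 1} = \beta^{(1-\mu)\alpha/2 - 1}$, whose integral near $0$ diverges precisely when $\mu>1$. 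Hence the expectation is $+\infty$, in particular $\ge K\, r_0^{\mu\alpha}$ for every $K>1$. The main obstacle is the uniform choice of the direction cone and the threshold $\beta_*$ guaranteeing $\vec x_1,\vec y_1\in D$: one must check that, near a boundary point of the unit ball, there is a fixed-aperture set of step directions for which both nearby points stay inside even as $\beta^{-1/2}$ grows, at least up to a fixed cutoff; the scale-invariance of the configuration in $r_0$ keeps this from depending on $r_0$, and beyond the cutoff the divergence of the $\beta$-integral already does all the work.
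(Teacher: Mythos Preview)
Your geometric setup and identification of the divergent integrand $\beta^{(1-\mu)\alpha/2-1}$ are both right, and match the paper. But the ``good event'' is described with the $\beta$-constraint pointing the wrong way, and this breaks the argument.

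You place $\vec x_0,\vec y_0$ at distance $O(r_0)$ from $\partial D$, so the jump lengths are $d(\vec x_0)\beta^{-1/2},\,d(\vec y_0)\beta^{-1/2}=O(r_0)\beta^{-1/2}$. For \emph{small} $\beta$ these lengths exceed the diameter of $D$, and no choice of direction keeps the points inside; the event $\{\vec x_1,\vec y_1\in D\}$ forces $\beta$ to be \emph{bounded below}, roughly $\beta\gtrsim r_0^2$, not $\beta\le\beta_*$. In particular, on the event $\{\vec x_1,\vec y_1\in D\}$ one has $r_1\le\operatorname{diam}(D)=2$, so the expectation you are bounding is at most $2^{\mu\alpha}$ and is never $+\infty$. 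Your final integral $\int_0^{\beta_*}\beta^{-\mu\alpha/2}p(\beta)\,d\beta$ is taken precisely over the range where the indicator vanishes.

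The paper's fix is exactly the missing step: with the inward cone of directions and the constraint that the jump length not exceed $L/2$ (half the chord length), one gets $\beta\in\bigl((2\delta/L)^2,1\bigr)$ where $\delta=d(\vec x_0)$. The lower bound then reads
\[
\mean{r_1^{\mu\alpha}\,1_{\{\vec x_1,\vec y_1\in D\}}}
\;\ge\; c\,r_0^{\mu\alpha}\int_{(2\delta/L)^2}^{1}\bigl(1+\beta^{-1/2}\cos\theta_0\bigr)^{\mu\alpha}p(\beta)\,d\beta,
\]
and one sends $\delta\to 0$ (hence also $r_0\le\delta\to 0$) so that the lower limit of integration tends to $0$ and the integral diverges. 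So the expectation is finite for each fixed configuration but can be made to exceed $K\,r_0^{\mu\alpha}$ for any $K$ by choosing $\delta$ (and $r_0$) small enough. Your ``scale-invariance in $r_0$'' remark does not apply here because $D$ has fixed size; the whole point is that the ratio blows up only in the limit $r_0\to 0$.
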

\begin{proof}
	% \]
	Again, write
	\[
		(\vec x_1-\vec y_1)%
		=(\vec x_0-\vec y_0)%
		+ \frac{d(\vec x_0)-d(\vec y_0)}{\sqrt {\beta_0}}\pp{
			\hat{\vec r}_0 \cos \theta + \hat{\vec r}_0^\perp \sin \theta}.
	\]
	Choose $\vec z\in \partial D$ with inward-pointing normal
	$\vec n$ and let
	$L$ be the distance from $\vec z$ to
	the far boundary in the direction $\vec n$. Choose $\delta>0$ and $\vec
		x_0,\vec y_0\in D$ with $r_0=\norm{\vec x_0-\vec y_0}\le \delta$ on the line
	through $\vec z$ in direction $\vec
		n$, such that $d(\vec x_0)=\delta$ and  $d(\vec y_0)=d(\vec
		x_0)+r_0.$  By taking $\theta_0>0$ small, we can choose an interval
	$(-\theta_0,\theta_0)$ so all directions $\vec
		\Theta$ from
	$\vec x_0,\vec y_0$ that are angle $\theta\in(-\theta_0,\theta_0)$ from
	$\vec n$ are at
	least $L/2$ from the far boundary (uniformly in $\delta$
	small).
	All jumps  from
	$\vec x_0,\vec y_0$ in the direction $\theta\in(-\theta_0,\theta_0)$ of
	length $L/2$ or less remain in $D$. When
	$d(\vec x_0)/\sqrt{\beta_0}\le L/2$, we have $\beta_0\in(2 \delta/L)^2,1)$.
	Then,
	\[
		\mean{r_{1}^{\mu \alpha}} \ge r_0^{\mu\alpha} \int_{(2 \delta/L)^2}^1 \pp{1+
			\frac{1}{\beta_0^
				{1/2}} \min_{\theta\in(-\theta_0,\theta_0)}\cos \theta}^{\mu\alpha} p
		(\beta_0)\,d\beta_0.
	\]
	The pdf of the beta distribution
	$\text{Beta}(\alpha/2,(2-\alpha)/2)$ is
	\[
		p(\beta) =\frac1{B(\alpha/2,(2-\alpha)/2))}\, \beta^{\alpha/2-1} \,(1-\beta)^ {-\alpha/2} =\frac1 \pi \,{\sin(\pi\alpha/2)} \,\beta^{\alpha/2-1}\,(1-\beta)^ {-\alpha/2}.%%
	\]
	Consequently, for $\mu>1$,
	\[
		\int_0^1 \beta^{-\mu \alpha/2} p(\beta)\,d\beta=\infty.
	\]
	Hence,
	\[	\int_{(2 \delta/L)^2}^1 \pp{1+
			\frac{1}{\beta_0^
				{1/2}} \min_{\theta\in(-\theta_0,\theta_0)}\cos \theta}^{\mu\alpha} p
		(\beta_0)\,d\beta_0\to \infty\qquad \text{as $\delta\downarrow 0$.}
	\]
	Thus, for any $K>1$, we can choose $\delta,r_0$ (and hence $\vec x_0,\vec
		y_0$) such
	that $\mean{r_1^{\mu\alpha}\,1_{\Bp{\vec x_1,\vec y_1\in D}}}\ge K r_0^\alpha$.
\end{proof}

Via Chebyshev`s inequality, \cref{hole} implies the following bound on
the probability coupled WOS paths
are separated.
%%%%%%%%%%%%%%%%%%%%%%%%%%%%%%

\begin{corollary}
	\label[corollary]{muni}%
	Under the assumptions of \cref{prep}, there exists a constant \(C>0\)
	and \(\lambda\in(0,1)\)
	such that, for any $\vec x_0,\vec y_0\in D$,
	\begin{equation*}
		\prob{r_n> \epsilon;\;%
			\text{\(\vec x_k,\vec y_k\in D\) for \(k=1,\dots,n-1\)}%
			\,|\, \vec x_0,\vec y_0}% 
		\le C \,\frac{{ r_{0}^{\mu\alpha}}} {\epsilon^{\mu\alpha} }
		\,\lambda^n.
	\end{equation*}
\end{corollary}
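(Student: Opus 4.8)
The plan is to reduce the statement to \cref{prep} via a single application of Markov's inequality in its $p$-th moment form (the variant sometimes attributed to Chebyshev). Set $Z_n\coloneq r_n^{\mu\alpha}\,1_{\Bp{\vec x_k,\vec y_k\in D\colon k=1,\dots,n-1}}$, a nonnegative random variable. First I would note that on the event $\Bp{\vec x_k,\vec y_k\in D\colon k=1,\dots,n-1}$ the indicator appearing in $Z_n$ equals $1$, and since $t\mapsto t^{\mu\alpha}$ is increasing on $[0,\infty)$,
\[
\Bp{r_n>\epsilon}\cap\Bp{\vec x_k,\vec y_k\in D\colon k=1,\dots,n-1}\subseteq \Bp{Z_n>\epsilon^{\mu\alpha}}.
\]
Hence the conditional probability to be bounded is at most $\prob{Z_n>\epsilon^{\mu\alpha}\mid\vec x_0,\vec y_0}$.

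Next I would invoke \cref{prep} with $n$ there replaced by $n-1$ (legitimate for $n\ge 2$; the case $n=1$ follows from the uniform $\mu\alpha$-moment bound on $r_1$ established in the proof of \cref{prep}): there is a constant $C_0>0$ such that, for all $n\ge 1$,
\[
\mean{Z_n\mid\vec x_0,\vec y_0}\le C_0\,\lambda^{n-1}\,r_0^{\mu\alpha},
\]
with $\lambda\in(0,1)$ the constant of \cref{hole}. The conditional Markov inequality then yields
\[
\prob{Z_n>\epsilon^{\mu\alpha}\mid\vec x_0,\vec y_0}\le \frac{\mean{Z_n\mid\vec x_0,\vec y_0}}{\epsilon^{\mu\alpha}}\le \frac{C_0}{\lambda}\,\frac{r_0^{\mu\alpha}}{\epsilon^{\mu\alpha}}\,\lambda^{n},
\]
and taking $C\coloneq C_0/\lambda$ completes the argument with the same $\lambda$ as in \cref{prep}.

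I do not expect a genuine obstacle here: this is a one-line moment estimate sitting on top of \cref{prep}. The only points to watch are bookkeeping ones — the index shift between \cref{prep} (which controls $r_{n+1}$ with the survival indicator running to $k=n$) and the corollary (which controls $r_n$ with the indicator running to $k=n-1$), the trivial treatment of $n=1$, and the fact that Markov's inequality must be applied to the \emph{product} $r_n^{\mu\alpha}\,1_{\Bp{\cdots}}$ rather than to $r_n^{\mu\alpha}$ on its own, since it is precisely this product whose conditional expectation decays like $\lambda^n$ and precisely this product that dominates the event of interest.
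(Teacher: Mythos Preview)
Your argument is correct and is in fact the clean Markov/Chebyshev route that the paper's preamble to the corollary announces but then does not quite follow. The paper's own proof takes a slightly longer path: it conditions on $r_{n-1}$, uses the bound $r_n\le(1+\beta_n^{-1/2})\,r_{n-1}$ together with the explicit Beta$(\alpha/2,1-\alpha/2)$ tail near zero to obtain $\prob{r_n>\epsilon\mid r_{n-1}}\le C\,(r_{n-1}/\epsilon)^{\alpha}$, then invokes $\mu\le 1$ to pass to the exponent $\mu\alpha$ before integrating against the moment bound of \cref{prep}. Your approach bypasses this one-step analysis entirely by applying Markov directly to $Z_n=r_n^{\mu\alpha}\,1_{\{\vec x_k,\vec y_k\in D:\,k\le n-1\}}$, which is precisely the quantity whose conditional expectation \cref{prep} already controls. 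This is shorter and avoids any appeal to the Beta density; the paper's detour through the Beta tail gives a little extra structural information (how the jump distribution drives separation) but is not needed for the corollary as stated. Your handling of the index shift and the $n=1$ case is also correct.
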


%%%%%%%%%%%%%%%%%%%%%%%%%%%%%%%%%%%%%%%%%%%%%%%%%%
\begin{proof}
	As ${r_n}\le (1+1/\sqrt \beta)\, {r_{n-1}}$, the event that $r_n>\epsilon$
	given $r_{n-1}$ is contained in
	$\sqrt{\beta} \le 1/(\epsilon/r_{n-1} -1)\le 2(r_{n-1}/\epsilon)$ for $r_{n-1}<\epsilon/2$. The density $p(\beta)$ of $\mathrm{Beta}(\alpha/2,1-\alpha/2)$ satisfies $p(\beta)=\sorder{\beta^{\theta/2-1}}$ as $\beta\downarrow 0$. Hence, for some $C>0$, $p(\beta)\le C \beta^{\alpha/2-1}$ for $\beta<1/2$. Hence, for larger constant $C$,
	\begin{equation*}
		\prob{\beta \le \frac1{(\epsilon/r_{n-1}-1)^2}\;|\; r_{n-1}} %
		\le \prob{\beta \le 4\frac{r_{n-1}^2}{\epsilon^2}\;|\; r_{n-1}}%
		\le C\,\frac{r_{n-1}^{\alpha}}{\epsilon^{\alpha}},\qquad \text{ if $\frac{2r_{n-1}}{\varepsilon}\le \frac 12$.}
	\end{equation*}
	This inequality also holds for $r_{n-1}\ge \epsilon/2$ by making sure
	$C\ge
		2^\alpha$. As $\mu\le 1$, to complete the proof, we apply \cref{prep}.
\end{proof}

We  require the following assumption regarding the WOS process near to the
boundary of $D$, to control how WOS particles accumulate near the
boundary. It is  shown to hold for a specific domain in \cref{app}
by numerically evaluating integrals. For $B=[0,1]^2$, it is seen to hold for $t\approx 1$
for $\alpha\le 0.5$ and for $t\approx 0.9$ for $\alpha\le 1.8$.

\begin{assumption}
	\label[assumption]{skin2}%
	Suppose, for some $\lambda,t\in(0,1)$
	and $A>0$,
	that
	\[
		\mean{\Phi(\vec x_1) \,|\,\vec x_0}%
		\le \lambda\, \Phi(\vec x_0),
	\]
	where
	\begin{equation}
		\Phi(\vec x)%
		\coloneq 1_{\Bp{\vec x\in D}}
		\,\max\Bp{A,\frac{1}{d(\vec x)^t}}.\label{eqq2}
	\end{equation}

\end{assumption}
The initial set of vertices has the following moment
property with respect to $\Phi(\vec x)$.
\begin{lemma}
	\label[lemma]{init}
	Let $\vec z_\ell^k$ for $i=1,\dots,N_\ell$ be the vertices of the
	triangulation at level-$\ell$ defined in \cref{triangulation}. For the $\Phi$ in
	\cref{skin2},
	there exists $C>0$ independent of $\ell$ such that
	\[
		\frac1N \sum_{k=1}^{N_\ell}%
		\Phi(\vec z_\ell^k)<C.
		%  \qquad\text{and}\qquad
		% \frac1N \sum_{k=1}^{N_\ell}%
		%  \Psi(\vec z_\ell^k,\alpha)<C.
	\]
\end{lemma}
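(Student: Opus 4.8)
The plan is to estimate $\frac1{N_\ell}\sum_{k=1}^{N_\ell}\Phi(\vec z_\ell^k)$ by grouping the vertices according to their distance to $\partial D$. Recall $\Phi(\vec x)=\max\{A,d(\vec x)^{-t}\}$ with $t\in(0,1)$, so the only vertices contributing more than $A$ are those within distance $A^{-1/t}$ of the boundary; for all others $\Phi=A$ and their contribution to the average is trivially bounded by $A$. So the real task is to bound the contribution of the near-boundary vertices.

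First I would introduce dyadic shells $S_m\coloneq\{\vec x\in D: 2^{-m-1}< d(\vec x)\le 2^{-m}\}$ for $m=0,1,2,\dots$. On $S_m$ we have $\Phi(\vec x)\le \max\{A,2^{(m+1)t}\}\le C\,2^{mt}$. The number of level-$\ell$ vertices lying in $S_m$ is controlled by area: since the mesh is shape-regular with mesh-width $h_\ell=2^{1-\ell}h$, each vertex ``occupies'' area of order $h_\ell^2$ (more precisely, the triangles incident to a vertex have total area $\gtrsim h_\ell^2$, and a vertex in $S_m$ has all incident triangles within an $O(h_\ell)$-neighbourhood of $S_m$). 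The set $S_m$ is an $O(2^{-m})$-collar of $\partial D$; since $D$ is a bounded polygon, $\partial D$ has finite length $|\partial D|$, so $\area(S_m)\le C\,|\partial D|\,2^{-m}$ (this uses the Lipschitz/polygonal boundary to get the collar-volume estimate, with the constant depending only on $D$). Hence the number of vertices in $S_m$ is at most $C\,\area(S_m)/h_\ell^2 \le C\,2^{-m}/h_\ell^2$, but this should be capped at $N_\ell$ itself and, more importantly, a vertex in $S_m$ requires $2^{-m}\gtrsim h_\ell$, i.e.\ $m \le \log_2(1/h_\ell)+O(1)=: m_\ell$, since no vertex can be closer to the boundary than a fixed fraction of $h_\ell$ in a shape-regular mesh (otherwise an incident triangle would have to stick outside $D$ or be badly shaped near a polygon corner — this is where I need to be slightly careful at corners, but a polygon has finitely many corners contributing $O(1)$ vertices each, absorbed into the constant).

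Putting these together, the near-boundary contribution is
\[
\frac1{N_\ell}\sum_{k:\ d(\vec z_\ell^k)\le A^{-1/t}}\Phi(\vec z_\ell^k)
\le \frac1{N_\ell}\sum_{m=0}^{m_\ell} C\,2^{mt}\cdot \frac{C\,2^{-m}}{h_\ell^2}
= \frac{C}{N_\ell h_\ell^2}\sum_{m=0}^{m_\ell} 2^{-(1-t)m}.
\]
Since $t<1$, the geometric series $\sum_{m\ge0}2^{-(1-t)m}$ converges to a constant depending only on $t$, so the bound is $C/(N_\ell h_\ell^2)$. Finally, $N_\ell\ge c\,\area(D)/h_\ell^2$ by shape-regularity (the argument already used in the proof of \cref{3.3}), so $N_\ell h_\ell^2\ge c\,\area(D)$ and the whole expression is bounded by a constant $C(\alpha,D)$ — actually independent of $\alpha$ — as claimed. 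Adding the $O(A)$ contribution from the far-from-boundary vertices completes the proof.

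The main obstacle I expect is making the collar-volume estimate $\area(S_m)\le C\,2^{-m}$ and the ``no vertex closer than $c\,h_\ell$'' claim fully rigorous near the corners of the polygon: at a reentrant corner the incident triangles of a near-corner vertex can be geometrically awkward, and shape-regularity is exactly the hypothesis that rescues this, but one should note that the number of corners is finite and each corner contributes only $O(1)$ vertices per level whose $\Phi$-values, while possibly large (of order $h_\ell^{-t}$), are multiplied by $1/N_\ell = O(h_\ell^2)$ and hence contribute $O(h_\ell^{2-t})\to 0$. So the corner vertices are harmless. The interior-shell counting is the substantive estimate and is standard once the collar volume is in hand.
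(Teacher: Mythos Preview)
Your argument is correct and is essentially the paper's proof: both partition the vertices into annular shells by distance to $\partial D$, bound the fraction of vertices in a shell of width $w$ by $C\,w$, and sum the contributions, with convergence driven by $t<1$. The only cosmetic difference is that the paper uses equal-width shells $((j-1)\epsilon,\,j\epsilon]$ and compares the resulting Riemann-type sum to $\int_0^{\bar d/2}x^{-t}\,dx$, whereas you use dyadic shells and a geometric series; your handling of corners and of the innermost shell (no interior vertex closer than $c\,h_\ell$ to $\partial D$) is in fact more explicit than the paper's rather terse treatment.
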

\begin{proof}
	% We only perform the calculation for $\Phi$, which has the stronger divergence of $\Phi(\vec x)\to \infty$ as $d(\vec x)\to 0$. 
	Let $\diameter=\sup\Bp{\norm{\vec x-\vec y}\colon \vec x,\vec y\in D}$ denote
	the diameter of $D$.
	The vertices are distributed uniformly and the
	proportion of the $N_\ell$ vertices $\vec z^k_\ell$ such that $d(\vec
		z^k_\ell)\in ((j-1)\epsilon,  j\epsilon)$, for $j=1,\dots,\ceil{\diameter/2\varepsilon}$,
	is less than $C\epsilon$ for
	a
	constant $C$ (that depends on the geometry of $D$; $C=4/\diameter $ when
	$D$ is a
	ball of diameter $\diameter $). Hence, for $t\in(0,1)$,
	\begin{align*}
		\frac1{N_\ell} \sum_{k=1}^{N_\ell}%
		\frac{1}{d(\vec z^k_\ell)^t}%
		 & \le \sum_{j=1}^{\ceil{\diameter/2\varepsilon}} \frac{C\epsilon}{(j\epsilon)^t}
		\le C\int_0^{\diameter /2} \frac{1} {x^t}\,dx%
		=\frac C {1-t}\bp{x^{1-t}\strutB}_0^{\diameter /2}%
		= \frac C {1-t} \pp{\frac{\diameter }{2}}^{1-t}.
	\end{align*}
\end{proof}
\subsection{Main theorem on coupling}
The MLMC estimator $v^{\mathrm{ML}}$ defined in \cref{vML} is defined in
terms of $v^{\coarse}_\ell, v^{\fine}_\ell, v_{\ell_0}$. Key to the success of the estimator
is the coupling between the fine-level estimator $v^{\fine}_{\ell+1}$ and coarse-level
estimator $v^{\coarse}_\ell$, which must be  generated
by the same random variables. To write this down precisely, consider independent
random variables
$\beta_n\sim
	\mathrm{Beta}(\alpha/2,1-\alpha/2)$, $\vec\Theta_n,\vec \Phi_n\sim \mathrm{U}(S^{d-1})$,
and $S_k\sim \mathrm{U}([0,1])$. If $\vec x_n$ is the WOS process starting
at $\vec x_0=\vec x$ generated by these inputs, define
\begin{equation}
	V(\vec x)%
	%    =g( \vec x_{N^{\vec x}})%
	%   + \sum_{k=0}^{N^{\vec x}-1} F(\vec x_k; S_k, \vec \Phi_k)
	=g( \vec x_{N^{\vec x}})%
	+ \sum_{k=0}^{N^{{\vec x}}-1} F(\vec x_k),\label{pkos}
\end{equation}
using $F(\vec x_k)=F(\vec x_k; S_k, \vec \Phi_k)$ (see  \cref{XX}). The
field $V$ is not easily evaluated as it requires a WOS process for every
$\vec
	x$ and we evaluate it only at the vertices of the triangulations. Let $V_\ell$
be independent copies of $V$. The fields $v^{\fine}_{\ell+1}$
and $v^{\coarse}_\ell$ are defined as linear interpolants using vertices of the
relevant triangulation as initial data. That is,
\[
	v^{\fine}_{\ell+1}%
	\coloneq I_{\ell+1} \{V_\ell(\vec z^k_{\ell+1})\colon k=1,\dots,N_{\ell+1}\},\qquad
	v^{\coarse}_{\ell}\coloneq I_\ell \{V_\ell(\vec z^k_\ell)\colon k=1,\dots,N_\ell\}.
\]
In this way, $v_{\ell+1}^{\fine}$ and $v^{\coarse}_\ell$ are given by the linear interpolant
of the same copy of $V$, based on vertices of the level $\ell+1$ or $\ell$
triangulation. Note that $v_\ell^{\fine}$ and $v_{\ell+1}^{\fine}$ are independent
(and similarly for the coarse versions).

We now give the main result on coupling
between $v^{\coarse}_\ell$ and $v^{\fine}_{\ell+1}$.
\begin{theorem}
	\label[theorem]{tcouplingrate} Suppose that \cref{hole,skin2} hold with exponents
	$t,\mu<1$.  Suppose that $f$ and
	$g$  are uniformly $\mu\alpha$-H\"older continuous with H\"older
	constant $L$. Then, the coupling condition holds for the MLMC
	complexity theorem (\cref{ct}) with $\beta=\min\Bp{\mu\alpha,\mu
			\alpha t/(t+\mu\alpha)}$. In particular, there exists $C>0$ such that
	\begin{align}
		\mean{
			\norm{v_{\ell+1}^{\fine}-v_\ell^{\coarse}-\mean{v_{\ell+1}^{\fine}-v_\ell^{\coarse}}}^2}%
		\le C\, \bp{L \,h_\ell^{\mu\alpha}%
			+  h_\ell^{\mu t\alpha/(t+\mu\alpha)} }.%
	\end{align}
\end{theorem}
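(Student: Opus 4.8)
The plan is to exploit that $v_{\ell+1}^{\fine}$ and $v_\ell^{\coarse}$ are piecewise-linear interpolants of the \emph{same} realization $V$ of the WOS field on nested meshes, so that their difference $\phi\coloneq v_{\ell+1}^{\fine}-v_\ell^{\coarse}$ is supported nodally only at the vertices that are new at level $\ell+1$. Indeed, the level-$(\ell+1)$ mesh refines the level-$\ell$ mesh, so $\phi$ vanishes at every level-$\ell$ vertex, and on a level-$\ell$ triangle with corners $\vec z_a,\vec z_b,\vec z_c$ it is piecewise linear on the four sub-triangles with value $V(\vec m)-\tfrac12(V(\vec z_a)+V(\vec z_b))$ at the midpoint $\vec m$ of edge $\vec z_a\vec z_b$. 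Applying the exact cubature \cref{later} at level $\ell+1$, using $\area(\tau)\le Ch_\ell^2$, that each new vertex lies in $O(1)$ triangles, and $\mean{\norm{\phi-\mean{\phi}}_{L^2(D)}^2}\le\mean{\norm{\phi}_{L^2(D)}^2}$, reduces the claim to bounding $h_\ell^2\sum_{\vec m}\mean{(V(\vec m)-\tfrac12(V(\vec z_a)+V(\vec z_b)))^2}$ over the $O(h_\ell^{-2})$ new vertices $\vec m$; bounding each second difference by $|V(\vec m)-V(\vec z_a)|+|V(\vec m)-V(\vec z_b)|$, it suffices to estimate $\mean{|V(\vec x)-V(\vec y)|^2}$ for coupled WOS fields started from $\vec x,\vec y\in D$ with $r_0\coloneq\norm{\vec x-\vec y}\le h_\ell$, tracking the dependence on $\vec x,\vec y$.

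For the core estimate, fix a scale $\epsilon\in(0,1)$ and let $G_\epsilon$ be the event that the coupled processes of \cref{snow} both stay in $D$ with separation $r_n\le\epsilon$ up to a common exit step $N$ (with $r_N\le\epsilon$). On $G_\epsilon$, telescoping \cref{pkos} term by term and using that $f,g$ are $\mu\alpha$-H\"older with constant $L$ and that $z\mapsto d(z)^\alpha$ is H\"older of exponent $\min(1,\alpha)\ge\mu\alpha$, gives $|V(\vec x)-V(\vec y)|\le C(1+L)\sum_{k\le N}r_k^{\mu\alpha}$; writing $|V(\vec x)-V(\vec y)|^2 1_{G_\epsilon}\le(|V(\vec x)-V(\vec y)|1_{G_\epsilon})(|V(\vec x)|+|V(\vec y)|)$, taking expectations, applying H\"older's inequality with the uniform moments of $V$ from \cite[Corollary 6.5]{kos}, and inserting the geometric decay $\mean{r_k^{\mu'\alpha}1_{\{\vec x_j,\vec y_j\in D:\,j\le k\}}}\le C\lambda^k r_0^{\mu'\alpha}$ of \cref{prep} (for $\mu'$ slightly above $\mu$, possible as $\mu<1$) yields $\mean{|V(\vec x)-V(\vec y)|^2 1_{G_\epsilon}}\le CLr_0^{\mu\alpha}$. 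On $G_\epsilon^{\mathsf c}$, bound $|V(\vec x)-V(\vec y)|^2\le2(|V(\vec x)|^2+|V(\vec y)|^2)$ and use the same moment control with H\"older to get $\mean{|V(\vec x)-V(\vec y)|^2 1_{G_\epsilon^{\mathsf c}}}\le C\prob{G_\epsilon^{\mathsf c}}$. Now $G_\epsilon^{\mathsf c}$ lies in the union of: the paths separating ($r_n>\epsilon$ for some $n$ with all earlier points in $D$), which by \cref{muni} summed over $n$ has probability $\le C(r_0/\epsilon)^{\mu\alpha}$; and the exit steps differing, which forces some visited $\vec x_n$ or $\vec y_n$ within $O(\epsilon)$ of $\partial D$, so using $1_{\{d(\vec z)<\delta\}}\le\delta^t\Phi(\vec z)$ for the $\Phi$ of \cref{skin2}, iterating \cref{skin2} and summing gives probability $\le C\epsilon^t(\Phi(\vec x_0)+\Phi(\vec y_0))$.

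Combining, $\mean{|V(\vec x)-V(\vec y)|^2}\le C(Lr_0^{\mu\alpha}+(r_0/\epsilon)^{\mu\alpha}+\epsilon^t(\Phi(\vec x_0)+\Phi(\vec y_0)))$, and choosing $\epsilon=r_0^{\mu\alpha/(t+\mu\alpha)}$ balances the last two terms into $Cr_0^{\mu t\alpha/(t+\mu\alpha)}(\Phi(\vec x_0)+\Phi(\vec y_0))$. Substituting $r_0\le h_\ell$ into the reduction of the first paragraph and summing over the $O(h_\ell^{-2})$ new vertices, the $h_\ell^2$ prefactor cancels the vertex count while the averaged $\Phi$-values $\tfrac1{N_{\ell+1}}\sum_{\vec m}\Phi(\vec m)$ and $\tfrac1{N_\ell}\sum\Phi(\vec z)$ are $O(1)$ by \cref{init}, leaving $\mean{\norm{\phi-\mean{\phi}}^2}\le C(Lh_\ell^{\mu\alpha}+h_\ell^{\mu t\alpha/(t+\mu\alpha)})$. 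Since $h_\ell=\Theta(2^{-\ell})$ the second term dominates, which is exactly the coupling condition of \cref{ct} with $\beta=\min\{\mu\alpha,\mu t\alpha/(t+\mu\alpha)\}$.

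I expect the main obstacle to be the bad-event analysis: recognizing that two $\epsilon$-close coupled paths can disagree on their exit step \emph{only} near $\partial D$, so that the boundary-accumulation bound \cref{skin2} (together with the vertex moment bound \cref{init}) can be applied, and then tuning $\epsilon$ against the separation estimate \cref{muni}. The probabilistic bookkeeping is also delicate: the restriction $\mu<1$ is forced by the moment computation behind \cref{prep} (finiteness of $\mean{(1+1/\sqrt{\beta})^{\mu\alpha}}$), the jump on the exit step must be either absorbed into the bad event or controlled by a $\mu\alpha$-moment estimate, and one must use enough uniform moments of $V$ from \cite[Corollary 6.5]{kos} for the bad-event contribution to carry the full exponent $\mu t\alpha/(t+\mu\alpha)$.
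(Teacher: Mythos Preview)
Your strategy is essentially the paper's: reduce via the exact cubature \cref{later} to second differences at new midpoints, control $\mean{|V(\vec x)-V(\vec y)|^2}$ by splitting into a ``good'' regime (paths stay $\epsilon$-close and exit together) handled via \cref{prep}, and a ``bad'' regime handled by balancing \cref{muni} against the boundary-accumulation bound from \cref{skin2}, then average $\Phi$ over vertices using \cref{init}. The paper organises the same pieces slightly differently (it treats the $g$-term and the $F$-sum separately, and splits directly on $N^{\vec x}=N^{\vec y}$ versus not, then on $d(\vec x_k)\gtrless\epsilon$), but the ingredients and the $\epsilon=r_0^{\mu\alpha/(t+\mu\alpha)}$ balancing are identical.

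Two places in your write-up would not deliver the stated exponent as written and should be tightened. First, on $G_\epsilon^{\mathsf c}$ you bound $\mean{|V|^2 1_{G_\epsilon^{\mathsf c}}}$ by H\"older against moments of $V$; since $V$ is not uniformly bounded (it is a random-length sum), H\"older only yields $C\,\prob{G_\epsilon^{\mathsf c}}^{1-\delta}$, which costs a factor $\delta$ in the exponent. The paper avoids this by using that $g$ and $F$ are individually bounded: for the exterior term $(g-g)^2\le 4\|g\|_\infty^2$ gives $\prob{G_\epsilon^{\mathsf c}}$ exactly, and for the tail $\sum_{N^{\vec y}\le k<N^{\vec x}}F(\vec x_k)$ one conditions on the decoupling step and uses that the residual WOS lifetime has uniformly geometric tails, so its second moment is bounded and the estimate again reduces to $\prob{N^{\vec y}<N^{\vec x}}$. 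Second, on $G_\epsilon$ you invoke \cref{prep} for an exponent $\mu'>\mu$, but \cref{hole} is assumed only at the given $\mu$ (it propagates downwards by Jensen, not upwards). The paper sidesteps this by interpolating each increment directly, $|g(\vec x_k)-g(\vec y_k)|^2\le (2\|g\|_\infty)\cdot L\,r_k^{\mu\alpha}$, and then applying \cref{prep} at exponent $\mu$; the $F$-sum is handled analogously. With these two adjustments your argument matches the paper's.
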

\begin{proof}
	The constant $C$ in this proof is a generic constant independent
	of $\ell$ and $h_\ell$ and changes from line to line.
	Following \cref{later}, the key observation is that
	\[
		\mean{\norm{v_{\ell+1}^{\fine}-v_\ell^{\coarse}-\mean{v_{\ell+1}^{\fine}-v_\ell^{\coarse}}}_{L^2(D)}^2}%
		=\mean{\sum_{k}\operatorname{Area}(\tau_k^{\ell+1}) %
			\sum_{i,j=1}^3 a_{ij}%
			\,{\delta}(\vec z^i)%
			\,{\delta}(\vec z^j)},
	\]
	where $\vec z^i$ are the vertices of $\tau_k^{\ell+1}$ and ${\delta}=v_
		{\ell+1}^{\fine}-v_\ell^{\coarse}-\mean{v_{\ell+1}^{\fine}-v_\ell^{\coarse}}$. As
	$ab\le \tfrac12 (a^2+b^2)$, there exists $C>0$ such that
	\begin{align}
		\mean{\norm{v_{\ell+1}^{\fine}-v_\ell^{\coarse}-\mean{v_{\ell+1}^{\fine}-v_\ell^{\coarse}}}_{L^2(D)}^2}%
		 & \le C\,\max_k\operatorname{Area}(\tau^{\ell+1}_k) \, %
		\mean{\sum_{i}\delta(\vec z^i)^2} \notag                \\ %
		 & \le C\,\frac{1}{N^\ell} \,                           %
		\mean{\sum_{i}\delta(\vec z^i)^2},\label[ineq]{one}
	\end{align}
	where the sum is over all vertices $\vec z^i$ of the fine
	triangulation $\tau^
		{\ell+1}_k$. If $\vec z^i$ is also a vertex in the coarse triangulation
	$\tau^\ell_k$, $\delta(\vec z^i)=0$. Otherwise,  $\vec z^i$ is the midpoint of
	an edge $\vec y_a\leftrightarrow\vec y_b$ of the coarse triangulation. Then,
	$v_\ell^{\coarse}(\vec z^i)$ is defined by linear interpolation and
	\[
		\delta(\vec z^i)%
		=v^{\fine}_{\ell+1}(\vec z^i)%
		-\frac12 \pp{v^{\coarse}_\ell(\vec y_a)+v^{\coarse}_\ell(\vec y_b)}%
		-\mean{v^{\fine}_{\ell+1}(\vec z^i)-\frac12 \pp{v_\ell^{\coarse}(\vec y_a)+v_\ell^{\coarse}(\vec
				y_b)}}.
	\]
	As $(a+b)^2\le 2a^2+2b^2$,
	\begin{align}
		\mean{ \delta(\vec z^i)^2}  %
		 & \le \mean{(v_{\ell+1}^{\fine}(\vec z^i)-\frac12 \pp{v^{\coarse}_\ell(\vec y_a)+v^{\coarse}_\ell(\vec
		y_b)})^2} \notag                                                                                        \\%
		 & \le \frac12 \pp{\mean{(v_{\ell+1}^{\fine}(\vec z^i)-v_{\ell+1}^{\fine}(\vec y_a))^2}
		+\mean{(v_{\ell+1}^{\fine}(\vec
			z^i)- v_{\ell+1}^{\fine}
			(\vec
			y_b))^2}\strutB}.\label[ineq]{two}%
	\end{align}
	In the last line, we use the fact that $v_{\ell+1}^{\fine}$ and $v_\ell^{\coarse}$ agree
	on the level-$\ell$ triangulation.
	Putting together \cref{one,two}, this means we  establish the result if
	we show, for
	a constant $C$,
	that, for any $\vec z^k_\ell+\vec r_0\in D$,
	\begin{equation}
		\frac{1}{N_\ell}\sum_{k=1}^{N_\ell}	%
		\mean{\pp{v^{\fine}_{\ell+1}(\vec z^k_\ell)-v^{\fine}_{\ell+1}(\vec
		z^k_\ell+\vec r_0)}^2}% 
		\le C\norm{\vec r_0}^\beta. \label[ineq]{get}
	\end{equation}

	We start by estimating the contribution to $v^{\fine}_{\ell+1}$ from the exterior
	function
	$g$, for paths starting at $\vec x_0$ and $\vec y_0=\vec x_0+\vec r_0$, first
	ignoring the contribution from the right-hand-side term $f$.  At the end of
	the proof, we briefly discuss what changes need to made to account for a non-zero  $f$.

	Let $N^{\vec x}$ denote the WOS exit time of a path starting from $\vec x\in D$.
	Denote coupled WOS paths by $\vec x_n$ and $\vec y_n$ with  $\vec y_0=\vec
		x_0+\vec r_0$.
	% Let $A_k=\{\omega\in \Omega\colon \vec
	% x_j,\vec y_j\in D,\;j=1,\dots,k-1\}$.
	Write
	\begin{align*}
		 & \qquad \mean{\pp{g(\vec x_{ N^{\vec x}})-g(\vec y_{N^{\vec y}})}^2} %
		=\mean{\sum_
		{k\ge 1} \pp{g(\vec x_{k})-g(\vec y_k)}^2
		1_{\Bp{k=N^{\vec x}=N^{\vec y}}}}
		\\%
		 & \quad+\mean{\sum_{k\ge  1} \pp{g(\vec x_{N_{\vec x}})-g
			(\vec y_{N_{\vec y}})}^2  1_{\Bp{k=N^{\vec x}<N^{\vec y}}}}%
		+\mean{\sum_{k\ge 1} \pp{g(\vec x_{N_{\vec x}})-g
			(\vec y_{N_{\vec y}})}^2  1_{\Bp{k=N^{\vec y}<N^{\vec x}}}}.%
		%      &\quad+\mean{\sum_{k\ge n} \pp{g(\vec x_
		% {N_{\vec x}})-g(\vec y_{N_{\vec y}})}^2 1_{\min\{N^{\vec x},N^{\vec y}\}=k} }.%\\%
		% &\quad+\mean{\pp{g(x_{ N^{\vec x}})-g(y_{N^{\vec y}})}^2 1_{A_{n^*+1}^{\coarse}}}
		%   \\
		% &\qquad     +\mean{\pp{g(\vec x_{N_{\vec x}})-g
		% (\vec y_{N_{\vec y}})}^2 1_{\Bp{A_{N^{\vec x}}^{\coarse} \cup A_{N^{\vec y}}^{\coarse}}
		%  } }.%
	\end{align*}

	For the first term, using the H\"older regularity of $g$, note that
	\begin{align*}
		\mean{
		\pp{\sum_{k\ge 1}
		\pp{g(\vec x_k)-g(\vec y_k)}^2
		1_{\Bp{k=N^{\vec x}=N^{\vec y}}}  }}%
		 & \le \sum_{k\ge 1}
		\mean{  (g(\vec x_k)-g(\vec y_k))^2%
		1_{\Bp{k=N^{\vec x}=N^{\vec y}}}}
		\\%
		 & \le    L (2\norm{g}_\infty)^{2-\mu\alpha} \sum_{k\ge 1}
		\mean{ r_{k} ^{\mu\alpha}1_{\Bp{k=N^{\vec x}=N^{\vec y}}}}.
	\end{align*}
	By \cref{prep},
	\[
		\mean{ r_{k} ^{\mu\alpha} 1_{\Bp{k=N^{\vec x}=N^{\vec y}}}}
		\le C\,
		\lambda^k\,r_0^{\mu\alpha}.
	\]
	Consequently,
	\[
		\mean{
		\pp{\sum_{k=N^{\vec x}=N^{\vec y}}
		\pp{g(\vec x_k)-g(\vec y_k)}^2  }
		1_{\Bp{N^{\vec x}=N^{\vec y}=k}}  } \le L\,(2\norm{g}_\infty)^{2-\alpha} \,r_0^{\mu\alpha} \,\sum_{k\ge 1} \lambda^k .%
	\]
	As
	$\lambda<1$, we find a $C>0$ such that
	\[
		\mean{
		\pp{\sum_{k\ge 1}
			\pp{g(\vec x_k)-g(\vec y_k)}^2  }
		\,1_{\Bp{k=N^{\vec x}=N^{\vec y}}}  }%
		\le C\, L\,r_0^{\mu\alpha}.
	\]

	It remains to consider the case where the paths exit at different times. The second and third terms are equivalent, and we consider only
	\[
		\sum_{k\ge 1} \mean{\pp{g(N^{\vec x})-g(N^{\vec y})}^2 1_{\Bp{k=N^
		{\vec y}<N^{\vec
		x}}}} \le 4\norm{g}^2_\infty  \sum_{k\ge 1}  \prob{ k=N^{\vec y}<N^{\vec x}}.%%
	\]
	Now,
	\begin{equation}
		\label{cook}\prob{ k=N^{\vec y}<N^{\vec x}}
		=\prob{d(\vec x_k)>\varepsilon,\;  k=N^{\vec y}<N^{\vec x}}+%
		\prob{d(\vec x_k)\le \varepsilon,\;  k=N^{\vec y}<N^{\vec x}}.%
	\end{equation}
	For the first term,  note that
	$ \prob{d(\vec x_k)>\varepsilon,\;  k=N^{\vec y}<N^{\vec x}}%
		\le \prob{r_k>\varepsilon,\,k=N^{\vec y}<N^{\vec x}}$, so, by
	\cref{muni},
	\[
		\prob{ d(\vec x_k)>\varepsilon, \,k=N^{\vec y}<N^{\vec x}}%
		\le C\,\lambda^n\,\frac{r_0^{\mu\alpha} }{\epsilon^{\mu\alpha}}.
	\]
	As $\lambda<1$,
	\[
		\sum_{k\ge 1}  \prob{ d(\vec x_k)>\varepsilon, \,k=N^{\vec y}<N^{\vec
		x}} \le C\,\frac{\lambda}{1-\lambda}\, r_0^{\mu\alpha} \,\frac 1
			{\varepsilon^{\mu\alpha}}.
	\]

	For the second term in \cref{cook}, use \cref{skin2}, to
	see that
	\[
		\mean{\Phi(\vec x_1) \,|\,\vec
			x_0}%
		\le \lambda%
		\,    \Phi(\vec x_0),\qquad \vec x_0\in D.
	\]
	Iterating, we have
	\begin{equation}
		\mean{\Phi(\vec x_n)\,1_{\Bp{\vec x_1,\ldots,\vec
			x_n\in D}} \,|\,\vec x_0}%
		\le \lambda^n
		\,\Phi(\vec x_0).\label[ineq]{seagull}
	\end{equation}
	Hence,
	\[
		\sum_{n\ge 1}
		\mean{\Phi(\vec x_n) 1_{\Bp{\vec x_1,\ldots,\vec
						x_n\in D}} \,|\,\vec x_0}%
		<\frac{1}{1-\lambda}\,\Phi(\vec x_0).
	\]
	Note that $\Phi(\vec x)\ge 1/d(\vec x)^t$ if $d(\vec x)^t\le 1/A$. Hence $\prob{d(\vec x_n)<\varepsilon}\le C\, \varepsilon^t \,\mean{\Phi(\vec
			x_n)}$ for a constant $C$. Hence,
	\[
		\sum_{k\ge 1} \prob{ d(\vec x_k)\le \varepsilon,\;N^{\vec x}\ge
		k} \le C\, \varepsilon^t \,\Phi(\vec x_0).%%
	\]
	To match the two probabilities and choose $\varepsilon$ in terms of
	$r_0$, put $r_0^{\mu\alpha}/{\varepsilon^{\mu\alpha}} %
		= \varepsilon^t$ so that  \(
	\varepsilon%
	=r_0^{\mu\alpha/(t+\mu\alpha)}
	\).  Thus,
	\[
		\sum_{k\ge 1} \mean{\pp{g(N_{\vec x})-g(N_{\vec y})}^2  1_{\Bp{N^{\vec
				x}>N^{\vec y}=k}}}% 
		\le C\, r_0^{\mu\, t\,\alpha/(t+\mu\alpha)}\, \Phi(\vec x_0).
	\]
	Then, adding
	the contributions for all terms, we conclude that
	\begin{align}
		\mean{\pp{g(\vec x_{ N^{\vec x}})-g(\vec y_{N^{\vec y}})}^2}%
		 & \le C\pp{ L \,r_0^{\mu\alpha} %
		+  \norm{g}^2_\infty\,r_0^{\mu\,t\,\alpha/(t+\mu\alpha)}
		\,\Phi(\vec x_0)}.  \label[ineq]{am1}%
	\end{align}
	By \cref{init}, $\frac1{N_\ell}
		\sum_{k=1}^N \Phi(\vec z^k_\ell)$  is less than a $C$ independent of $\ell$.
	This
	implies \cref{get}, as we can average over initial vertices $\vec x_0=\vec
		z^k_\ell$ for $k=1,\dots,N_\ell$.  Hence, in the case $f=0$,
	\[
		\norm{v^\fine_{\ell+1}-v^\coarse_\ell-\mean{v^\fine_{\ell+1}-v^\coarse_\ell}
		}_{L^2(D)}%
		\le C\pp{ L \,r_0^{\mu\alpha}%
		+  \norm{g}^2_\infty\,r_0^{\mu t\alpha/(t+\mu\alpha)}}.
	\]

	We now discuss extending the argument to $f\ne 0$. As $f$ is
	$\mu\alpha$-H\"older continuous, from \cref{pwos}, we see that $F$ is
	$\mu\alpha$-H\"older continuous. Now,
	\begin{align*}
		I_1 & \coloneq		\mean{\pp{\sum_{k\ge 0}
		(F(\vec x_{ k})-F(\vec y_{k})) 1_{\Bp{k<\min\Bp{N^{\vec x},N^{\vec y}}}}}^2}           \\%
		    & \le\mean{\sum_{k=1}^\infty                                                       %
		1_{\Bp{k=\min \Bp{N^{\vec x}, N^{\vec y}}}}%
		\pp{\sum_{j=0}^{k-1}(F(\vec x_{ j})-F(\vec y_{j})) }^2
		}                                                                                      \\
		    & \le\mean{\sum_{k=1}^\infty                                                       %
		1_{\Bp{k=\min \Bp{N^{\vec x}, N^{\vec y}}}}%
		\sum_{j=0}^{k-1} L \,(2\norm{F}_\infty)^{2-\mu\alpha} \,r_0^{\mu \alpha} \,\lambda^j
		}                                                                                      \\
		    & \le  L\,(2\norm{F}_\infty)^{2-\mu\alpha} \,r_0^{\mu \alpha} \frac{1}{1-\lambda}.
	\end{align*}
	Hence, for a constant $C$,
	\begin{equation}
		I_1 \le C\, L\,r_0^{\mu\alpha}.\label[ineq]{a0}
	\end{equation}
	Similarly,
	\begin{align}
		\mean{\pp{\sum_{N^{\vec y}\le k<N^{\vec x}}\,F(\vec x_{k})}^2}%
		 & \le  C\, \norm{F}_\infty^2 %
		\,r_0^{t\mu\alpha/(t+\mu\alpha)}  \Phi(\vec x_0).\label[ineq]{a1}
	\end{align}

	To put everything together, recall that
	\[
		v(\vec x) = g(\vec x _{N^\vec x}) +    \sum_{k=0}^{N^{\vec x}-1} F(\vec x_k).%%
	\]
	Hence,
	\begin{align*}
		\abs{v^\fine_{\ell+1}(\vec x)-v^{\fine}_{\ell+1}(\vec y)}
		 & \le
		\abs{g(\vec x_{N^{\vec x}})-g(\vec y_{N^{\vec y}})}
		+\sum_{k=0}^{\min\Bp{N^{\vec x},N^{\vec y}}-1}
		\abs{F(\vec x_k)-F(\vec y_k)}               \\
		 & \quad+\sum_{k=N^{\vec x}}^{N^{\vec y}-1}
		\abs{F(\vec y_k)}%
		+\sum_{k=N^{\vec y}}^{N^{\vec x}-1}
		\abs{F(\vec x_k)}.
	\end{align*}
	The required bound \cref{get}  follows  from \cref{am1,a0,a1,seagull}.
\end{proof}
We  now summarise the implications of the MLMC complexity theorem. We have designed an MLMC algorithm for solving the exterior-value problem for the fractional Laplacian with the following complexity.
\begin{corollary}\label[corollary]{cor:complexity}
	Let the assumptions of \cref{tcouplingrate} hold. Consider
	$v^{\mathrm{ML}}\in L^2(D)$ defined by \cref{vML} as an approximation to the
	solution of $u$ of \cref{main}. By choosing the number of samples $M_\ell$ and
	number of levels $L$ as in \cref{ct}, we achieve
	$\norm{v^{\mathrm{ML}}-u}_{L^2 (\Omega,L^2(D) ) }\le \varepsilon$ with
	$\sorder{\epsilon^{-3+\beta/2}}$ cost, where $\beta=\min\Bp
		{\alpha,t\mu\alpha/(t+\mu\alpha)}$.%
\end{corollary}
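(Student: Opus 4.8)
The plan is to simply apply the MLMC complexity theorem (\cref{ct}) with the parameters that have now been identified for the WOS field-solve construction. From \cref{mf} we have the consistency rate $a=2$ (the bias of the interpolated mean is $\order{h_\ell^2}=\order{2^{-2\ell}}$, which in particular satisfies $a>1/2$). From the discussion in \cref{triangulation}, refining each triangle into four increases the number of vertices by a factor of $4=2^d$ with $d=2$, and the cost of one WOS sample on level $\ell$ is proportional to the number of vertices, so $\gamma=d=2$. The coupling rate is supplied by \cref{tcouplingrate}: the coupling condition of \cref{ct} holds with $\beta=\min\Bp{\mu\alpha,\mu\alpha t/(t+\mu\alpha)}$, which (since $t,\mu<1$ and $t/(t+\mu\alpha)<1$) equals $\mu\alpha t/(t+\mu\alpha)$. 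Because $t<1$ forces $\mu\alpha t/(t+\mu\alpha)<\mu\alpha\le 2$, we have $\beta<2=\gamma$, so the hypotheses $a>1/2$ and $\beta<\gamma$ of \cref{ct} are met.

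With these identifications in hand, \cref{ct} directly yields that the level-$L$ MLMC estimator $v^{\mathrm{ML}}$ of \cref{vML} achieves $\mean{\norm{u-v^{\mathrm{ML}}}_{L^2(D)}^2}\le\varepsilon^2$, i.e. $\norm{v^{\mathrm{ML}}-u}_{L^2(\Omega,L^2(D))}\le\varepsilon$, with work of order $\varepsilon^{-2-(\gamma-\beta)/a}$. Substituting $\gamma=2$ and $a=2$ gives the exponent $-2-(2-\beta)/2=-3+\beta/2$, which is the claimed $\sorder{\varepsilon^{-3+\beta/2}}$ cost with $\beta=\min\Bp{\alpha,t\mu\alpha/(t+\mu\alpha)}$. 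One should note that the variance hypothesis $\var(v(\vec z^i))\le C$ used implicitly is supplied by \cite[Corollary 6.5]{kos}, as already invoked in the proof of \cref{3.3}, and that \cref{tcouplingrate} also requires the H\"older hypotheses on $f$ and $g$, which are part of the assumptions of \cref{tcouplingrate} being inherited here.

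The only genuine subtlety — and the step I would be most careful with — is checking that the setup of \cref{field,triangulation} really satisfies the \emph{coupling condition} of \cref{ct} verbatim, namely that $v_{\ell+1}^{\fine}$ and $v_\ell^{\coarse}$ are equal in distribution to $v_\ell$ (up to the interpolation level) and that $v_\ell^{\fine}$ is independent of $v_\ell^{\coarse}$. This is exactly how the fields were constructed in \cref{triangulation}: both $v_{\ell+1}^{\fine}$ and $v_\ell^{\coarse}$ are interpolants of the \emph{same} independent copy $V_\ell$ of the field $V$ evaluated at the vertices of the level-$(\ell+1)$ resp. level-$\ell$ triangulation, while distinct copies $V_\ell$ are independent, so $v_\ell^{\fine}$ (built from $V_\ell$ at level $\ell+1$... note the index bookkeeping in \cref{triangulation}) and $v_\ell^{\coarse}$ (built from $V_{\ell-1}$) are independent. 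Once that bookkeeping is confirmed, everything else is a mechanical substitution of constants, so the corollary follows immediately; there is no hard estimate left to do here, the work having been done in \cref{mf,tcouplingrate}.
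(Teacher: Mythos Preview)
Your proposal is correct and follows essentially the same route as the paper's own proof: identify $a=2$ from \cref{mf}, $\gamma=2$ from the $2^d$-fold vertex growth in \cref{triangulation}, and the coupling rate $\beta$ from \cref{tcouplingrate}, then substitute into the cost bound $\varepsilon^{-2-(\gamma-\beta)/a}$ of \cref{ct}. Your additional checks that $a>1/2$, $\beta<\gamma$, and that the coupling/independence structure of $v_\ell^{\fine},v_\ell^{\coarse}$ matches the hypotheses of \cref{ct} are more explicit than the paper, which simply cites the three ingredients, but the argument is the same.
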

\begin{proof}
	This is a consequence of \cref{ct} given values for
	$a,\beta,\gamma$. We have $a=2$ from \cref{mf} and $\gamma=2$ (due
	to the $2^d$-factor increase in vertices with each level). For the coupling rate $\beta$, we have  \cref{tcouplingrate}.
\end{proof}
This compares favourably to vanilla Monte Carlo, which has complexity
$\varepsilon^{-3}$, reducing the computational cost by a factor $\varepsilon^{\beta/2}$.

\begin{figure}
	\begin{center}
		% trim=left bottom right top
		\includegraphics[clip,trim=2cm 4cm 0cm 1cm]{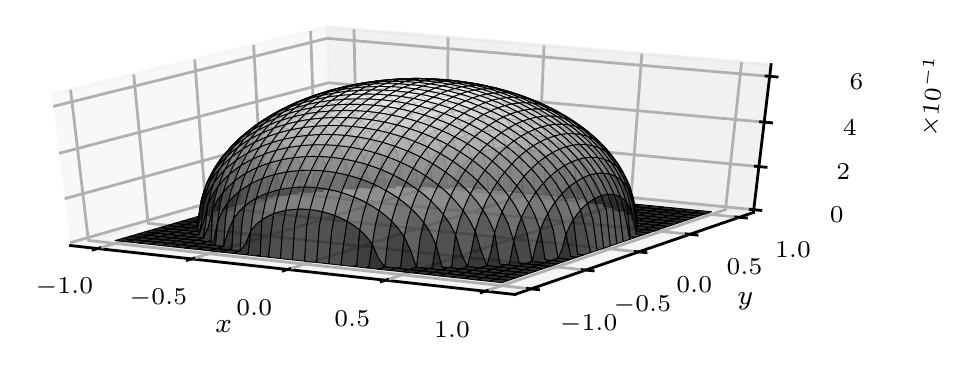}\\
		\includegraphics[clip,trim=2cm 4cm 0cm 1cm]{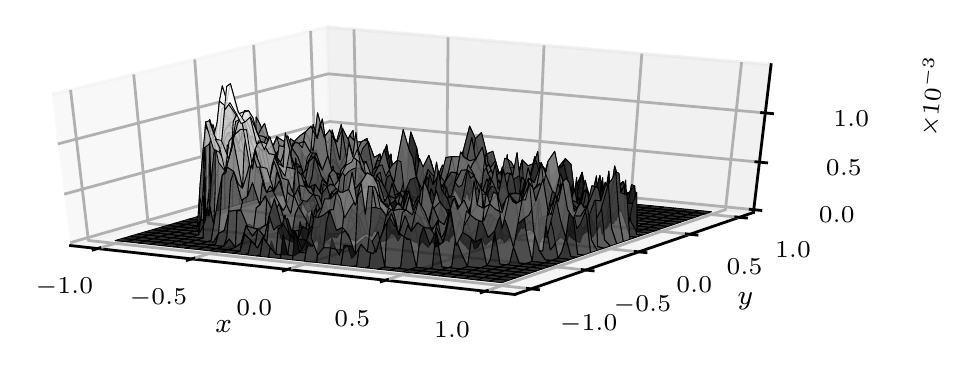}
	\end{center}
	\caption{The upper plot shows the solution $u(\vec x)$ defined
		\cref{ex1} for $\alpha=1$. The lower plot shows the absolute
		errors, for the computation with parameters  with
		$\varepsilon=1e-3$, $\ell_0=5$, $L=7$. On the finest level,
		$h=6.1\times 10^ {-5}$.}%
	\label{fig_ex1}
\end{figure}
\subsection{Numerical experiments}
We perform numerical experiments for $D=B(\vec 0,1)$. Define the
triangulation $\Bp{\tau_1^k}$ of the square $[-1,1]^2$ consisting of
the four triangles given by drawing diagonals. Let $\Bp{\tau_\ell^k}$
be the level-$\ell$ triangulation given by recursively dividing
triangles into four (using midpoints of edges). We apply WOS to the
vertices within the unit ball $D=B(\vec 0,1)$ and define
$v^{\mathrm{ML}}$ via  multilevel Monte Carlo (see \cref{vML}), using
piecewise-linear interpolants of sample averages to define $v_\ell$
on the triangulation $\Bp{\tau_\ell^k}$. The parameters for this
algorithm are the coursest level $\ell_0$, the finest level $L$, and
the tolerance $\varepsilon$. As test cases, we recall  two examples
on the unit ball where exact solutions are known
(e.g.,~\cite{Bucur2015-pp}).
\begin{example}
	\label[example]{ex1}%
	The problem with constant right-hand side and zero exterior condition on the unit ball,
	\[
		(-\Delta u)^{\alpha/2}u%
		=1\quad\text{on $D=B(\vec 0,1)$},\qquad %
		\text{$u=0$\quad on $D^{\coarse}$},
	\]
	has exact solution
	\[
		u(\vec x)= \frac{\Gamma(1-\alpha/2)}{2^\alpha%
			\Gamma(1+\alpha/2)%
			(\alpha/2) B(\alpha/2,1-\alpha/2)}%
		(1-\norm{\vec x}^2)^{\alpha/2},\qquad \vec x\in B(\vec 0,1).%
	\]
	The solution gives the mean first-exit time for an $\alpha$-stable
	Levy
	process from $B(\vec 0,1)$. It is plotted in \cref{fig_ex1} for
	$\alpha=1$, along with the error from the WOS approximation with
	$\ell_0=5$, $L=7$, and $\varepsilon=1e-3$.
\end{example}

\begin{figure}
	\begin{center}
		% trim=left bottom right top
		\includegraphics[clip,trim=2cm 4cm 0cm 1cm]{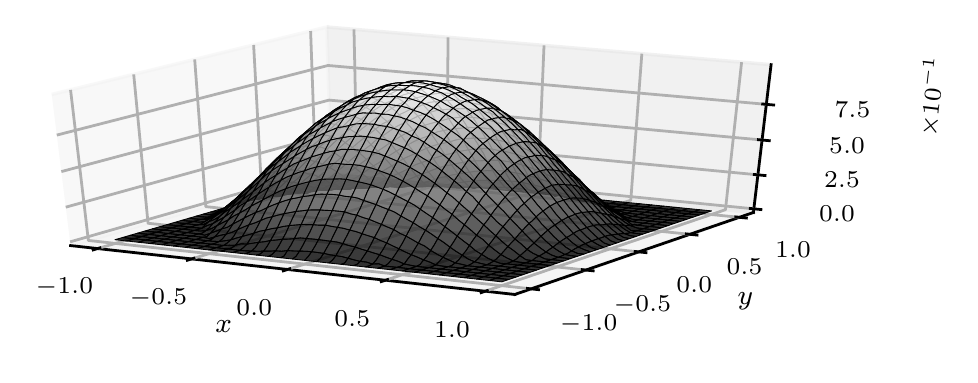}\\
		\includegraphics[clip,trim=2cm 4cm 0cm 1cm]{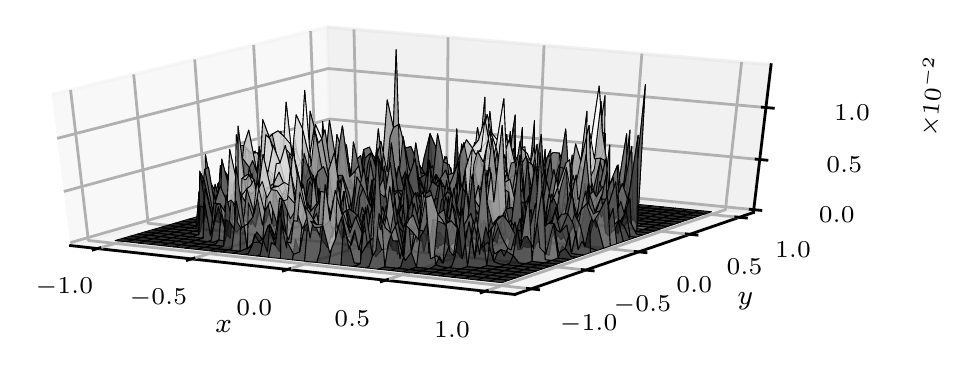}
	\end{center}
	\caption{The upper plot shows the solution $u(\vec x)$ of the
		problem in \cref{ex2} for $\alpha=1$. The lower plot shows the
		absolute errors, for the computation with parameters  with $
			\mathrm{tol}=1e-2$, $\ell_0=5$, and $L=7$. On the finest level,
		$h=6.1\times 10^{-5}$.}\label{fig_ex2}
\end{figure}
\begin{example}
	\label[example]{ex2}  The problem
	\[
		(-\Delta u)^{\alpha/2}u=f\quad\text{on $D=B(\vec 0,1)$},\qquad \text{$u=0$\quad
			on $D^{\coarse}$},
	\]
	for $f(\vec x)=(1-(1+\alpha/2)\,\norm{\vec x}^2),\ 2^\alpha\, \Gamma(2+\alpha/2)\,\Gamma(1+\alpha/2)$
	has exact solution
	\begin{equation}
		u(\vec x)%
		= (1-\norm{\vec x}^2)^{1+\alpha/2}.
		\label{exact2}%
	\end{equation}
	For $\alpha=1$, the solution is plotted in \cref{fig_ex2} along with
	the error for $\varepsilon=10^{-2}$, $\ell_0=5$, and $L=7$.
\end{example}
The third example  has less regular
coefficients and there is no explicit exact solution.
\begin{example}
	\label[example]{ex3} Let
	\[
		g(\vec x)=\sin(\norm{\vec x}^2),\qquad f(\vec x)=2+\norm{\vec x}^2.%
	\]
	The numerical solution of \cref{main} for $\alpha=1$ with $D=B(\vec 0,1)$
	is
	shown in \cref{fig_ex3}.
\end{example}
\begin{figure}
	\begin{center}
		% trim=left bottom right top
		\includegraphics[clip,trim=2cm 4cm 0cm 1cm]{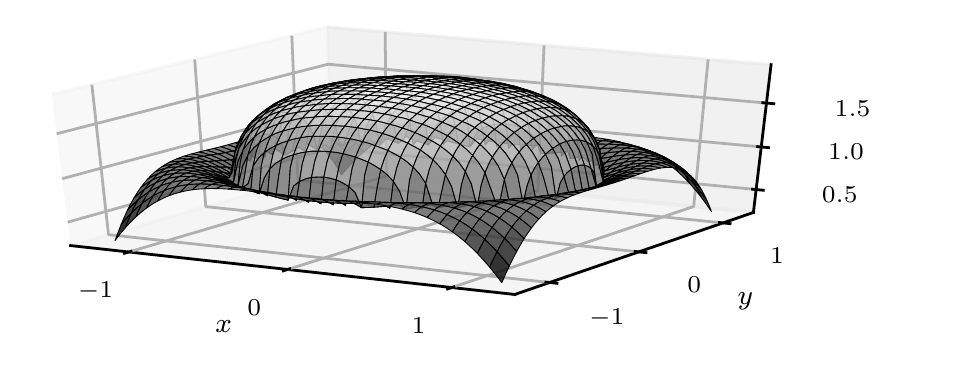}\\
	\end{center}
	\caption{The solution $u(\vec x)$ from \cref{ex3} for $\alpha=1$. }%
	\label{fig_ex3}
\end{figure}
\paragraph{Variance decay rates} We compute the variance decay rates for the
coupling numerically, and calculate the variance at level
$\ell$ defined by
\[
	V_\ell =\mean{\norm{ v^\fine_{\ell+1}-v^\coarse_\ell-\mean{v^\fine_{\ell+1}}-\mean{v^\coarse_\ell}}_{L^2(D)}^2}.%
\]
\cref{var_ex1} shows the variance $V_\ell$ against the mesh-width $h_\ell$ and
indicates decay rates $\beta\approx 1$ for \cref{ex1} independent of $\alpha$,
and rates $\beta=[0.25,0.47,0.67]$ for \cref{ex2} for $\alpha=
	[0.5,1.0,1.5]$.
Assuming $t=\mu=1$, the theoretical rate is $\beta\approx \alpha
	/(1+\alpha)$, which yields $\beta\approx[0.33, 0.5,0.6]$. The rate is
a
good prediction in \cref{ex3}. For \cref{ex1}, the right-hand side is
constant and the exterior condition is zero, which is a much simpler
scenario. Due to the constant right-hand side, zero
coupling-error results on the interior and improved performance can be expected, as found with the variance coupling rates of $\beta\approx 1$.

As a demonstration of the relative efficiency of multilevel Monte
Carlo and vanilla Monte Carlo methods, \cref{tol_cpu1} shows a plot
of CPU time against tolerance. Multilevel
Monte
Carlo is clearly more efficient.

We can also compare the computed solution to the exact $u\in L^2(D)$
by computing an approximate $L^2(D)$ norm of the error.
\cref{tab_residual} shows  errors for \cref{ex1,ex2} for seven values
of $\alpha$ and the computational time for a tolerance $10^{-2}$.
Good accuracy results, especially for larger values of $\alpha$, and
all results are computed in less than a minute  on a quad-core 3.2Ghz
i5-6500 CPU  with  8GB RAM (three cores are used in parallel for the
WOS samples). Small values of $\alpha$ give poorer results. This is
to be expected, due the sharp gradient near the border (see
\cref{exact2}), which is explained by a larger constant $C(\alpha,D)$
in \cref{3.3}.

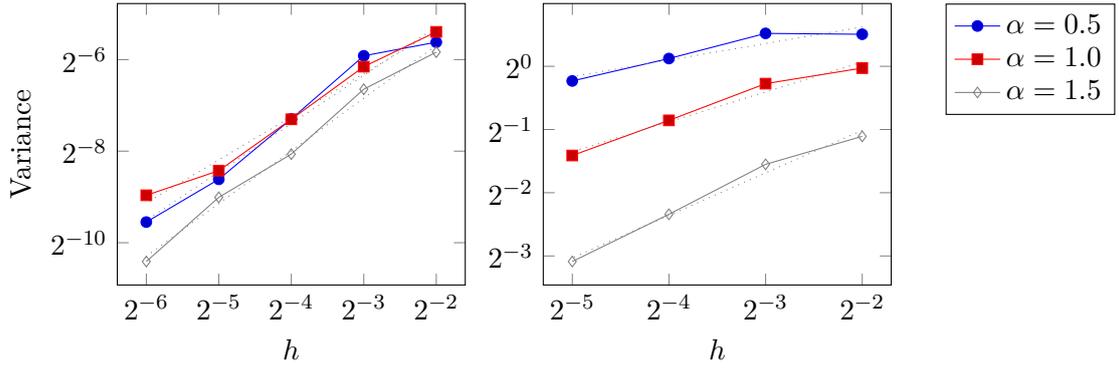
\begin{figure}
	\begin{center}
		\begin{tikzpicture}
			\begin{axis}[xlabel={$h$}, ylabel={Variance}, xmode={log},ymode=log,
					width=\textwidth*0.4,  log basis x={2},log basis y={2},legend style={at={(1.3,1.)},anchor=north}]% style ={at={(outer east)}}]
				;
				\addplot+[]
				coordinates {
						(0.25,0.020345886323608012) (0.125,0.01656016364257618) (0.0625,0.006365577998327078) (0.03125,0.0025527192310393363) (0.015625,0.001336149579428378)
					}; % alpha=0.5

				%        \addlegendentry{$\alpha=0.5$}
				\addplot+[color=gray,dotted,mark=none,forget plot]
				coordinates {
						(0.25,0.02560403255722408) (0.125,0.012319069531946171) (0.0625,0.005927170799902226) (0.03125,0.0028517862976672006) (0.015625,0.0013721023675741816)
					};% slope 1.0554777558309825
				\addplot+[]
				coordinates {
						(0.25,0.023784470476535785) (0.125,0.014045854391164417) (0.0625,0.006333880417721016) (0.03125,0.0029100794910107627) (0.015625,0.002004305048256096)
					}; % alpha=1.0

				%       \addlegendentry{$\alpha=1.0$}
				\addplot+[color=gray,dotted,mark=none,forget plot]
				coordinates {
						(0.25,0.024251386036990643) (0.125,0.01263299431019557) (0.0625,0.006580759755257166) (0.03125,0.0034280391404484015) (0.015625,0.0017857288193902446)
					};% slope 0.9408705722964207

				\addplot+[mark=diamond,color=gray]
				coordinates {
						(0.25,0.017502708501995916) (0.125,0.009985441845355924) (0.0625,0.003737019667417676) (0.03125,0.001943164772607361) (0.015625,0.0007353295920439526)
					}; % alpha=1.5

				%        \addlegendentry{$\alpha=1.5$}
				\addplot+[color=gray,dotted,mark=none,forget plot]
				coordinates {
						(0.25,0.019356139284517706) (0.125,0.008717828667681478) (0.0625,0.003926430553216735) (0.03125,0.0017684285246837748) (0.015625,0.0007964840851070584)
					};% slope 1.1507504701698414
			\end{axis}
			%      \caption{left g=0,f=quad; right f=0, g=Green's}
		\end{tikzpicture}
		\begin{tikzpicture}
			\begin{axis}[xlabel={$h$},  xmode={log},ymode=log,
					width=\textwidth*0.4,  log basis x={2},log basis y={2},legend style={at={(1.4,1.)},anchor=north}]%
				% style ={at={(outer east)}}]
				;
				\addplot+[]
				coordinates {
						(0.25,1.420545558724419) (0.125,1.4339995367572) (0.0625,1.0887117634185621) (0.03125,0.8514557377945302)
					}; % alpha=0.5

				\addlegendentry{$\alpha=0.5$}
				\addplot+[color=gray,dotted,mark=none,forget plot]
				coordinates {
						(0.25,1.5381443598745328) (0.125,1.283351317269097) (0.0625,1.0707646476502841) (0.03125,0.8933928809902237)
					};% slope 0.26127474920927807 alp/(alp+1)=0.3333333333333333
				\addplot+[]
				coordinates {
						(0.25,0.9804253484000355) (0.125,0.8266789422373255) (0.0625,0.5526421818227631) (0.03125,0.37601918479580343)
					}; % alpha=1.0

				\addlegendentry{$\alpha=1.0$}

				\addplot+[color=gray,dotted,mark=none,forget plot]
				coordinates {
						(0.25,1.0474355847987955) (0.125,0.7547040515409867) (0.0625,0.5437835163121671) (0.03125,0.3918098915847228)
					};% slope 0.47287859950055094 alp/(alp+1)=0.5

				\addplot+[mark=diamond,color=gray]
				coordinates {
						(0.25,0.46384430135807486) (0.125,0.34130788834672887) (0.0625,0.1974426519966588) (0.03125,0.11761033894003425)
					}; % alpha=1.5

				\addlegendentry{$\alpha=1.5$}

				\addplot+[color=gray,dotted,mark=none,forget plot]
				coordinates {
						(0.25,0.49564883001380167) (0.125,0.31090344420200505) (0.0625,0.19501902509076374) (0.03125,0.12232871927478844)
					};% slope 0.672851725979123 alp/(alp+1)=0.6

			\end{axis}
		\end{tikzpicture}
	\end{center}
	\caption{Plots of coupling variance $V_\ell$ against mesh-width $h_\ell$ for
		the		multilevel Monte Carlo, with dotted lines showing best linear fits.
		The left-hand plot shows \cref{ex1} with slopes $1.05, 0.94,1.15$; the
		right-hand side shows \cref{ex3} with slopes for  $0.25,0.47,0.67$
		for $\alpha=0.5,1.0,1.5$ respectively.}\label{var_ex1}
\end{figure}
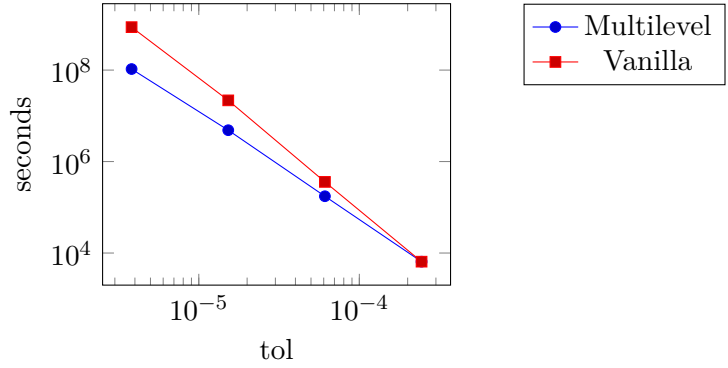
\begin{figure}
	\begin{center}
		\begin{tikzpicture}
			\begin{axis}[xlabel={tol}, ylabel={seconds}, xmode={log},ymode=log,  width=\textwidth*0.4,  log basis x={10},log basis y={10},legend style={at={(1.5,1.)},anchor=north}];
				\addplot+[]
				% ML
				coordinates {
						(0.000244140625,6474.757338701406) (6.103515625e-5,174532.96761901176) (1.52587890625e-5,4.8501413236256e6) (3.814697265625e-6,1.0540719736423744e8)
					};
				\addlegendentry{Multilevel}
				\addplot+[]
				% Vanilla
				coordinates {
						(0.000244140625,6474.757338701406) (6.103515625e-5,359285.61420145415) (1.52587890625e-5,2.178288723919411e7) (3.814697265625e-6,8.688204845658375e8)
					};

				\addlegendentry{Vanilla}
			\end{axis}
		\end{tikzpicture}
	\end{center}
	\caption{CPU execution time against tolerance $\varepsilon$ for the
	multilevel
	Monte Carlo and vanilla (i.e., $\ell_0=L$) Monte Carlo methods, based
	on \cref{ex2}.
	The multilevel parameter $\ell_0=6$ and $\varepsilon=2^{-2L}$, where
	$L$ is varied.}\label[figure]{tol_cpu1}
\end{figure}

% \newpage
% \paragraph{Example 2} Consider the problem
% \[
% (-\Delta u)^{\alpha/2}u=f,\;\text{on $D$},\qquad u=0,\;\text{on $D^{\coarse}$} 
% \]
% for the Swiss-chesse domain of twenty-five circles of radius one arranged in contact on a grid, There is no exact solution available. The  $d\times d$alk matrix-Outside-Spheres method results are shown in \cref{fig:2}. Each run takes around an hour to compute (about 900 grid points $x_j$ and 10,000 iterations). 
% \[
% (-\Delta u)^{\alpha/2}u=f,\;
% \]
% with exterior condition
% \[
% u(\vec x)=0 \qquad \vec x\in D^{\coarse}.
% \]

% last experiment%
% coordinates {
% (0.25,0.011240054471307869) (0.125,0.005103739258821479) (0.0625,0.002737086837198881) (0.03125,0.0004990099398446657) (0.015625,0.000656254668859282)
% }; % alpha=1.8
% (-2.9171590183664664, 1.1550913038593191)
% coordinates {
% (0.25,0.010905853893220406) (0.125,0.0048971402825353704) (0.0625,0.0021990009385453937) (0.03125,0.000987434471699473) (0.015625,0.00044339537051102086)
% };% slope 1.1550913038593191

%%%%%%%%%%%%%%%%%%%%%%%%%%%%%
\begin{table}
	\begin{center}
		\begin{tabular}{c  l  l r }
			\multicolumn{2}{c}{\cref{ex1}}            \\
			$\alpha$ & rel error & abs err & cpu time \\
			\midrule
			0.1      & 0.079     & 0.13    & 1.9      \\
			0.2      & 0.064     & 0.10    & 2.8      \\
			0.5      & 0.034     & 0.051   & 3.6      \\
			1.0      & 0.014     & 0.018   & 5.9      \\
			1.5      & 0.008     & 0.0090  & 17.0     \\
			1.8      & 0.00081   & 0.0085  & 36.6     \\
			1.9      & 0.0062    & 0.0065  & 60.0
		\end{tabular}
		\qquad
		\begin{tabular}{c  l  l r}
			\multicolumn{2}{c}{\cref{ex2}}            \\
			$\alpha$ & rel error & abs err & cpu time \\
			\midrule
			0.1      & 0.0093    & 0.0093  & 17.9     \\
			0.2      & 0.0057    & 0.0056  & 1.60     \\
			0.5      & 0.0057    & 0.0054  & 2.54     \\
			1.0      & 0.0058    & 0.0052  & 8.73     \\
			1.5      & 0.0090    & 0.0075  & 17.79    \\
			1.8      & 0.0084    & 0.0068  & 40.47    \\
			1.9      & 0.0085    & 0.0068  & 68.37
		\end{tabular}
	\end{center}
	\caption{Computed $L^2(D)$ relative and absolute errors and CPU execution time in seconds for a range of $\alpha$.}\label[table]{tab_residual}
\end{table}
%%%%%%%%%%%%%%%%%%%%%%%%%%%%%%%%%%%%%%%%%%%%%%

\section{Leading eigenvalue using the Arnoldi algorithm}\label{eigenvalue}
The Arnoldi algorithm is a well-known iterative method for computing
the leading eigenvalues of a large sparse matrix, based on projecting
the matrix onto a Krylov subspace.  See~\cite{Arnoldi_1951,Trefethen1997-sw,Craig2003-oa,Saad2011-bx}.
We show how to use Arnoldi to compute the smallest eigenvalue of
the fractional
Laplacian. That is, we seek the smallest $\lambda>0$ such that
\[
	(-\Delta)^{\alpha/2}w=\lambda\, w \text{ on $D$},\qquad
	\text{$w=0$ on $D^\coarse$}
\]
for some non-trivial function $w\colon\real^d\to \real$.
The first
step is to discretise the fractional
Laplacian and express the problem for a finite-dimensional linear operator.
To do this, we consider a triangular mesh $\mathcal{T}$ with vertices $\vec
	z^1,\dots,\vec z^N$. We consider $\vec v\in\real^N$ corresponding to values
of a function at vertices of the mesh and the solution operator for \cref{main}
with $g=0$ and $f=I_h \vec v$, the piecewise-linear interpolant for $\vec
	v$ on the mesh $\mathcal{T}$.
That is,
for $\vec v\in \real^N$, let $u\colon
	D\to \real$  be the solution to $(-\Delta )^{\alpha/2} u= f$ on $D$, where
$f=I_h \vec v$, and $u=0$ on $D^\coarse$.
Then, we denote by $A^{-1}$ the linear mapping from $\vec v\in \real^N$
to ${\vec u}=[u(\vec z^1),\dots,v(\vec z^N)]\in\real^N$.
By applying the Arnoldi algorithm to $A^{-1}$, we find the largest eigenvalue
of $A^{-1}$ and hence the smallest eigenvalue of $A$, which is the fractional
Laplacian approximated on $\mathcal{T}$. We use this as our
approximation to the smallest eigenvalue of the fractional Laplacian.

In practice, evaluating $A^{-1}\vec v=\vec u$ exactly is impossible and
we will be using the WOS algorithm. This means we will be using the Arnoldi
algorithm with inexact solves and exploiting the theory for variable-accuracy
Arnoldi algorithms started by  \cite{Bouras2000-bg,Simoncini2005-ra} and
developed
further in \cite{Berns-Muller2006-na,Freitag2010-kz}.  It turns
out that the accuracy of
the solves can be reduced as the Arnoldi algorithm proceeds, without loosing
accuracy on the computed eigenvalue. This leads to significant speed ups.
We develop the appropriate variable accuracy criterion for the WOS solve,
by establishing a criterion on the variance of the WOS solution necessary
for a certain confidence interval in the computed eigenvalue.

We now describe the algorithm. Throughout, $\norm{\cdot}$ denotes the
Euclidean norm.

\paragraph{Inexact Arnoldi iteration} To determine the smallest eigenvalue
of the fractional Laplacian $(-\Delta)^{\alpha/2}$ on a domain $D$, choose
a triangulation $\mathcal{T}$ of the domain with $N$ vertices and  consider
$N$-vectors of values at the vertices.
\begin{algorithm}
	\label[algorithm]{alg}
	\begin{enumerate}
		\item Choose initial unit-vector $\vec v_1\in\real^N$. Set $k=1$.
		\item Evaluate  $A^{-1}\vec v_k=\vec u+\vec f_k$, where $\vec f_k$ is
		      the error resulting from the WOS solve.
		\item Gram--Schmidt: for $i=1,\dots,k$, let $h_{ik}\coloneq \vec
			      v_i^\trans \vec u$ and compute $\tilde{\vec u}\coloneq \vec
			      u-\sum_{i=1}^{k} h_{ik}
			      \vec
			      v_i$, to produce $\tilde{\vec u}\in\real^N$ orthogonal to the current
		      Krylov space, given by $\text{span}\{\vec v_1,\dots,\vec v_k\}$, and
		      coefficients $h_{ik}$ for $i=1,\dots,k$. Let $h_{k+1,k}\coloneq
			      \norm{\tilde{\vec u}}$ and add the vector $\vec v_
				      {k+1}\coloneq \tilde{\vec u}/h_{k+1,k}$ to the Krylov space. Let
		      $V_
				      {k}$ be
		      the matrix with columns $\vec v_1,\dots, \vec v_{k}$.
		\item Compute eigenvectors $\vec w_j$ and eigenvalues $\theta_j$
		      (known as Ritz values) for the leading $k\times k$ submatrix $H_k$ of
		      the upper Hessenberg matrix $(h_{ij})$. The largest eigenpair $
			      (\vec
			      w, \theta)$ defines an approximate eigenvector for $A^{-1}$ by $V_k \vec w$ and approximate eigenvalue $\theta^{-1}$.
		\item  Increase $k$ and repeat.
	\end{enumerate}
\end{algorithm}
For finite-dimensional problems with exact solves ($\vec f_k=\vec 0$ for
all $k$), the algorithm is
expected to converge as $k\to \infty$ to the leading eigenpair of
$A^{-1}$. In our case, the algorithm introduces errors at several
stages: First, we represent the WOS solutions on a triangular mesh and
the WOS
algorithm must  evaluate the right-hand side function everywhere on
the domain $D$. We use a piecewise-linear interpolant and this leads to
an approximation error.
Additionally, there is a Monte Carlo error  on $\vec u$ due to the finite
number of samples. We assume the error due to linear interpolation is
negligible compared to Monte Carlo error; as the size of this error
is quadratic in the mesh width $h_\ell$, this can be achieved by
choosing the triangulation fine enough. For the Monte Carlo error, we develop
a theory for the resulting  error in the computed eigenvalue and  a practical
criteria for the tolerance  for the WOS solve.
\subsection{Choosing the WOS tolerance}
To analyse the error due to the WOS solve at step $k$, we write
\[
	A^{-1} \vec v_{k}%
	=\vec u+\vec f_k%
	= \sum_{i=1}^k h_{ik}\vec v_k
	+h_{k+1,k}\vec v_{k+1}+\vec f_k,
\]
where $\vec f_k$ represents the error due to the $k$th WOS solve. The right-hand
side is given by the representation of $\vec u$ in the Krylov subspace (using the entries in the Hessenberg matrix $H_k$).
We determine a criterion for relating the accuracy in the WOS solve
(the size of ${\vec f_k}$) in terms of the desired eigenvalue
accuracy. Stacking the expressions above, we have the well-known
Arnoldi relation
\[
	A^{-1} V_m = V_m H_m + \vec v_{m+1} h_{m+1} \vec e_m^\trans+F_m,
\]
where $\vec e_m$ is the $m$th standard basis vector in $\real^N$ and
$F_m$ is the matrix of column vectors $[\vec f_1,\dots,\vec f_m]$.

In the case of zero error $\vec f_k=\vec 0$ for $k=1,\dots,m$, the
size of the
eigenvalue residual $\vec r_m=A^{-1}  V_m \vec w-\theta^{-1} V_m \vec
	w$ is
given by $\norm{\vec r_m}=h_{m+1,m} \abs{\vec w^\trans \vec e_m}$,
which follows easily  from the Arnoldi relation
\[
	A^{-1} V_m = V_m H_m + \vec v_{m+1} h_{m+1} \vec e_m^\trans
\]
by taking the inner product with the eigenvector $\vec w$ of $H_m$.
For the case of inexact solves, the residual itself is not readily available
and the quantity $\norm{\vec r_m}\coloneq h_{m+1,m} \abs{ \vec w^\trans
		\vec e_m}$
serves as a computationally
convenient proxy.

For the next theorem, we will need the following non-degeneracy assumption~\cite[(3.3)--(3.4)]{Simoncini2005-ra}. The notation $\sigma_{\min}(A)$ refers to the smallest singular value of $A$.

\begin{assumption}
	\label[assumption]{ass:e}%
	For $k\le m$, there exists an eigenpair $(\theta^{(k-1)}, \vec w^{(k-1)})$
	of $H_{k-1}$ sufficiently close to an eigenpair $(\theta, \vec w)$ of $H_m$
	in the sense that
	\[
		\norm{\vec r_{k-1}}%
		\le \delta^2_{m,k-1} \frac{1}{4\norm{\vec s_m}},\qquad%
		\delta_{m,k}\coloneq \sigma_{\min}(H_m-\theta^{(k)}I),
	\]
	where $\vec s_m^\trans\coloneq [(\vec w^{(k-1)})^\trans,\vec 0^\trans]
		H_m - \theta^{(k-1)} [(\vec w^{(k-1)})^\trans,\vec 0^\trans]\in\real^m$,
	and
	\[
		\text{$\abs{\theta^{(k-1)}-\theta_j} > 2 \frac{\norm{\vec s_m}\,\norm{\vec r_{k-1}}}{\delta_{m,k-1}}$,\; for all eigenvalues $\theta_j\ne \theta$ of $H_m$. }
	\]
\end{assumption}

In the following theorem, we develop the accuracy criterion for the WOS
solves. We use the  spectral gap $\delta^{(k-1)}$ for $H_
		{k-1}$: let $\Lambda(H_{k})$ denote the set of eigenvalues of $H_k$ and
\begin{equation}
	\delta^{(k-1)}
	\coloneq \min_{\theta\in \Lambda(H_{k-1})- \theta^{(k-1)}}%
	\abs{\theta^{(k-1)}-\theta},\label{gap}
\end{equation}
where $\theta^{(k-1)}$ is the leading eigenvalue of $H_{k-1}$. The
quantity depends on the spectrum of $H_{k-1}$, which is easily computable
as $k$ is generally small. We also use the $\sigma$-algebra $\mathcal{F}_m$
generated by the random variables used in the WOS solves up to step $m$, so that the residual $\vec r_m$ is $\mathcal{F}_m$-measurable. We will use the conditional expectation $\mean{\cdot\,|\,\mathcal{F}_k}$ to average over input random variables (WOS samples from the field 	solves) for $m>k$.
\begin{theorem}\label{tt}
	Let $(\theta, \vec w)$ be an eigenpair of the $m$th Hessenberg matrix $H_m$
	of an inexact Arnoldi iteration as described in \cref{alg} such that
	\begin{equation}
		\norm{\vec r_m}%
		\coloneq h_{m+1,m}\abs{\vec e_m^\trans \vec w}%
		\le \mathrm{tol}.
		\label[ineq]{nr}
	\end{equation}
	Suppose that the eigenvalue $\theta$ is simple and the eigenvector is normalised,
	$\norm{\vec w}=1$. Fix $\varepsilon>0$ and suppose that the WOS error vector $\vec f_k$ at step $k$ satisfies
	\[
		\pp{\mean{\norm{{\vec f}_k}^2\,| \mathcal{F}_{k-1}}}^{1/2}%
		\le\frac{\varepsilon}{m}\times
		\begin{cases}{\displaystyle \frac{\delta^{(k-1)}}{\norm{\vec r_{k-1}}} }, &
			\text{if $k>1$ and \cref{ass:e} holds,}                                          \\[1.2em]
			1 ,                                                          & \text{otherwise.}
		\end{cases}
	\]

	Then, $V_m \vec w$ is an approximate eigenvector of $A$ with eigenvalue
	$\theta^{-1}$, in the sense that \begin{equation}
		\prob{    \norm{A^{-1} V_m \vec w- \theta^{-1} V_m \vec w}>2\,\mathrm{tol}}%
		\le \frac{\varepsilon^2}{\mathrm{tol}^2}%
		\,\mean{\frac 1m\norm{\vec \alpha}^2},
		\label[ineq]{pt}
	\end{equation}
	where $\vec \alpha$ has entries $\alpha_k$ satisfying
	\begin{equation}
		\abs{\alpha_k}%
		\le
		2\,\frac{\delta^{(k-1)}}{\delta_{m,k-1}} \text{ if \cref{ass:e} holds,}\quad
		\alpha_k=1 \text{ otherwise.}
		\label[ineq]{cookie}
	\end{equation}
	In particular, there exists an eigenvalue $\mu$ of $A$
	such that
	\[
		\prob{ \abs{\mu^{-1}-\theta^{-1} }>2\,\mathrm{tol}}%
		\le \frac{\varepsilon^2}{\mathrm{tol}^2}%
		\mean{\frac 1m\norm{\vec \alpha}^2}.
	\]
\end{theorem}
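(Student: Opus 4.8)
The plan is to specialise the variable-accuracy (inexact) Krylov analysis of \cite{Simoncini2005-ra} to the eigenvalue residual and then close with one probabilistic estimate. First I would write the inexact Arnoldi relation generated by \cref{alg} as $A^{-1}V_m = V_m H_m + h_{m+1,m}\vec v_{m+1}\vec e_m^\trans + F_m$, with $F_m = [\vec f_1,\dots,\vec f_m]$, and apply it to the unit eigenvector $\vec w$ of $H_m$ (so $H_m\vec w = \theta\vec w$). This gives $A^{-1}V_m\vec w - \theta V_m\vec w = h_{m+1,m}(\vec e_m^\trans\vec w)\vec v_{m+1} + F_m\vec w$, where the first term on the right has norm exactly $h_{m+1,m}\abs{\vec e_m^\trans\vec w} = \norm{\vec r_m}\le\mathrm{tol}$ by \eqref{nr} (using $\norm{\vec v_{m+1}}=1$). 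Hence, deterministically, the eigenvalue residual is at most $\mathrm{tol}+\norm{F_m\vec w}$, and since $\norm{V_m\vec w}=\norm{\vec w}=1$ the whole theorem reduces to showing that $\prob{\norm{F_m\vec w}>\mathrm{tol}}$ is bounded by the right-hand side of \eqref{pt}, together with a routine application of eigenvalue perturbation theory (a Bauer--Fike type bound, exploiting that the columns of $V_m$ are orthonormal) to pass from ``$V_m\vec w$ is an approximate eigenvector of $A^{-1}$ with residual $\le 2\,\mathrm{tol}$'' to ``some eigenvalue $\mu$ of $A$ satisfies the stated bound''.

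The core of the argument is that the later components $w_k$ of the Ritz vector of $H_m$ are already small once the iteration has begun to converge. Under \cref{ass:e} I would prove the estimate $\abs{w_k}\le\abs{\alpha_k}\,\norm{\vec r_{k-1}}/\delta^{(k-1)}$, with $\vec\alpha$ as in \eqref{cookie}. This is purely linear-algebraic: comparing $(\theta,\vec w)$ with the eigenpair $(\theta^{(k-1)},\vec w^{(k-1)})$ of the leading principal block $H_{k-1}$, one uses that $(H_m-\theta^{(k-1)}I)$ applied to the zero-padded vector $[(\vec w^{(k-1)})^\trans,\vec 0^\trans]^\trans$ equals $h_{k,k-1}w^{(k-1)}_{k-1}\vec e_k$, a vector of norm $\norm{\vec r_{k-1}}$ (because $H_m$ is upper Hessenberg with $H_{k-1}$ as its leading block), so the separation hypotheses of \cref{ass:e} make the associated spectral projector well conditioned and force $\vec w$ to lie within $\norm{\vec r_{k-1}}/\delta_{m,k-1}$ of the zero-padded $\vec w^{(k-1)}$; the spectral-gap ratio $\delta^{(k-1)}/\delta_{m,k-1}$ then produces the constant in \eqref{cookie}, and when \cref{ass:e} fails at step $k$ one simply uses $\abs{w_k}\le 1 = \alpha_k$. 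This step follows \cite[Section~3]{Simoncini2005-ra}.

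Combining the two ingredients, I substitute the componentwise bound into $\norm{F_m\vec w}\le\sum_k\abs{w_k}\,\norm{\vec f_k}$ and apply discrete Cauchy--Schwarz to get, deterministically, $\norm{F_m\vec w}^2\le\norm{\vec\alpha}^2\sum_k(\norm{\vec r_{k-1}}^2/(\delta^{(k-1)})^2)\,\norm{\vec f_k}^2$. The WOS accuracy criterion, whose right-hand side is $\mathcal{F}_{k-1}$-measurable, gives $\mean{\norm{\vec f_k}^2\,|\,\mathcal{F}_{k-1}}\le(\varepsilon/m)^2(\delta^{(k-1)})^2/\norm{\vec r_{k-1}}^2$, so each summand of the inner sum has conditional expectation at most $(\varepsilon/m)^2$ and the tower property bounds that sum in expectation by $\varepsilon^2/m$. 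Carrying $\norm{\vec\alpha}^2$ through yields $\mean{\norm{F_m\vec w}^2}\le(\varepsilon^2/m)\,\mean{\norm{\vec\alpha}^2}$, and Markov's inequality gives $\prob{\norm{F_m\vec w}>\mathrm{tol}}\le(\varepsilon^2/(m\,\mathrm{tol}^2))\,\mean{\norm{\vec\alpha}^2}$, which with the deterministic $\mathrm{tol}$ from the proxy-residual term is exactly \eqref{pt}; the final displayed inequality then follows from the perturbation step above.

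\textbf{Main obstacle.} The delicate point is that $\vec\alpha$ depends on the whole Hessenberg matrix $H_m$, hence on all of $\vec f_1,\dots,\vec f_m$, so it is not $\mathcal{F}_{k-1}$-measurable and cannot simply be pulled outside $\mean{\cdot\,|\,\mathcal{F}_{k-1}}$ in the cancellation above. Making the step $\mean{\norm{F_m\vec w}^2}\le(\varepsilon^2/m)\,\mean{\norm{\vec\alpha}^2}$ rigorous with the exact constants of \eqref{cookie} requires either reading the accuracy criterion as a conditional almost-sure bound on $\norm{\vec f_k}$ (so the substitution is done pathwise before any expectation), or a more careful tower-property bookkeeping that keeps each $\abs{\alpha_k}^2$ attached to the $k$th term; this, together with verifying the Simoncini-type estimate for $\abs{w_k}$ under \cref{ass:e}, is the technically demanding part, the remainder being bookkeeping around the Arnoldi relation and a one-line Markov bound.
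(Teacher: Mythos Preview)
Your proposal follows essentially the same route as the paper: write the inexact Arnoldi relation, split the eigenvalue residual into the proxy term $\norm{\vec r_m}$ and $\norm{F_m\vec w}$, factor $w_k=\alpha_k\beta_{k-1}$ with $\beta_{k-1}=\norm{\vec r_{k-1}}/\delta^{(k-1)}$, apply Cauchy--Schwarz, invoke the WOS accuracy hypothesis to kill the $\beta_{k-1}^2\norm{\vec f_k}^2$ terms via the tower property, cite \cite[Proposition~2.2]{Simoncini2005-ra} for the bound $\abs{\vec e_k^\trans\vec w}\le 2\norm{\vec r_{k-1}}/\delta_{m,k-1}$, and finish with Chebyshev/Markov and Bauer--Fike. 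The measurability issue you flag for $\vec\alpha$ is genuine and is handled in the paper in exactly the loose way you anticipate (the Cauchy--Schwarz step is written as a product of expectations without further justification), so your identification of this as the delicate point is accurate rather than a deficiency of your plan.
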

\begin{proof}
	For inexact solves, the Arnoldi relationship is
	\[
		A^{-1} V_m - F_m%
		= V_m H_m + h_{m+1,m} \vec v_{m+1} \vec e_m^\trans,
	\]
	where the columns of $F_m$ are $\vec f_1,\dots,\vec f_m$. If $(\theta,
		\vec w)$ is an eigenpair of $H_m$, then
	\[
		A^{-1} V_m \vec w - F_m \vec w %
		= \theta\, V_m \vec w%
		+ h_{m+1,m} \vec v_{m+1} (\vec e_m^\trans \vec w).
	\]
	Hence, the eigenvalue residual
	\begin{equation}
		\norm{    A^{-1} V_m \vec w - \theta V_m \vec w}%
		= h_{m+1,m} \abs{ \vec e_m^\trans \vec w} + \norm{F_m \vec w}.\label{evres}
	\end{equation}
	The first residual corresponds to~\eqref{nr} and is monitored during
	the Arnoldi iteration. The second term, we refer to as the extra
	residual, is due to the inexact solves and we analyse that now. Following~\cite{Simoncini2005-ra}, it can be written as
	\begin{align*}
		{F_m \vec w}%
		 & =\sum_{k=1}^m \vec{f}_k (\vec e_k^\trans \vec w) %
		=\sum_{k=1}^m \frac{\vec{f}_k \norm{\vec r_{k-1}}}{\delta^{(k-1)}}
		(\vec e_k^\trans \vec w)\frac{\delta^{(k-1)}}{\norm{\vec r_{k-1}}},
	\end{align*}
	where $\delta^{(k-1)}$ is the spectral gap defined in \cref{gap}.
	%
	%Here $\vec f_k$ is error in WOS field solve, assume mean-zero and $\mean{\norm{f_k}^2}$ is known and can be controlled (approx true).
	%  Then, can also contol
	% \[
	%     \sum_k \mean{\norm{f_k}^2}\alpha_k
	% \]
	% for any choice of $\alpha_k$
	% If $\norm{f_k}\frac{\norm{\vec r_{k-1}}^2}{\delta^{2(k-1)} }\le \epsilon/m$
	% \begin{align*}
	% \text{norm of extra residual}^2%
	% &=\sum_k \frac{\norm{\vec{f}_k}^2 \norm{\vec r_{k-1}}^2}{\delta^{2(k-1)}} \sum_k(\vec e_k^\trans \vec u)^2\frac{\delta^{2(k-1)}}{\norm{\vec r_{k-1}}^2}\\
	% &\le \sum_k \frac{\epsilon^2}{m^2} \sum_k \frac{\delta^{2(k-1)}}{\delta_{mk}^2}\\
	% &\le \epsilon^2  \frac 1m \sum_k \frac{\delta^{2(k-1)}}{\delta_{mk}^2}\\
	% \end{align*}
	% Let computed residual $\vec r_k$ be defined by
	% \[
	%     \vec r_{k}= \vec v_{k+1} h_{k+1,k} \vec e_k^\trans \vec u,
	% \]
	% where $\vec v_k$ is the $k$th basis function for the Krylov subspace, $\vec u$ is the eigenvector.
	Write,
	\begin{align*}
		{F_m \vec w}%
		%=\sum_{k=1}^m \vec f_k (\vec e_k^\trans \vec u)
		%&=\sum_{k=1}^m \frac{{\vec{f}_k} \norm{\vec r_{k-1}}}{\delta^{(k-1)}}
		%
		%(\vec e_k^\trans \vec u)\frac{\delta^{(k-1)}}{\norm{\vec r_{k-1}}}%
		=\sum_{k=1}^m \alpha_k \,\beta_{k-1}\, \vec f_k,
	\end{align*}
	for $\beta_{k-1}=\norm{\vec r_{k-1}}/\delta^{(k-1)}$ and $\alpha_{k}=
		\norm{\vec r_{k-1}}^{-1}  (\vec{e}_k^\trans \vec w )\,\delta^{(k-1)}$ if
	\cref{ass:e} holds, and $\beta_{k-1}=\alpha_k=1$ otherwise.
	Here the subscripts for $\alpha_k$ and $\beta_k$ indicate that they
	are $
		\mathcal{F}_k$-measurable. The Cauchy--Schwarz inequality provides that
	\begin{align*}
		\mean{\norm{ F_m \vec w}^2}%
		% &    \le \mean{\sum_{k=1}^m \abs{\beta_{k-1}}^2 \norm{\vec f_k}^2} %
		% \mean{ \sum_{k=1}^m\abs{\alpha_k}^2}\\
		 & \le \mean{\sum_{k=1}^m m \,\beta_{k-1}^2 \norm{\vec f_k}^2} \, %
		\mean{ \frac 1m\sum_{k=1}^m \alpha_k^2}.
	\end{align*}
	By Chebyshev's inequality,
	\begin{align*}
		\prob{\norm{F_m \vec w}>
			\mathrm{tol}}
		 & \le
		% \frac{1}{\epsilon^2}\mean{\norm{\sum_k \sqrt{m}\,\beta_{k-1}\vec f_k } ^2}
		% \mean{\sum_k {\alpha}_k^2 \frac 1m}\\
		% &=
		\frac{1}{\mathrm{tol}^2}\norm{\sum_{k=1}^m {m}\,\mean{\beta_{k-1}^2\norm{\vec f_k}^2 } }\,
		\mean{\frac 1m\sum_{k=1}^m {\alpha}_k^2}.
	\end{align*}

	% Now,
	% \begin{align*}
	%    \norm{ F_m \vec u}%
	% &    \le \sum_{k=1}^m \abs{\beta_{k-1} } \norm{\vec f_k} 
	% \abs{\alpha_k}.
	% \end{align*}

	% Assuming $\vec f_k$ has mean zero, and $\bar{\alpha_k}=\alpha_k-\mean{\alpha_k}$,
	% \begin{align*}
	% \prob{\norm{\sum_k \beta_k\vec f_k \alpha_k}>\epsilon}
	% \le
	% \prob{\norm{\sum_k \beta_k\vec f_k \bar{\alpha}_k}>\epsilon}%
	% +
	% \prob{\norm{\sum_k \beta_k\vec f_k \mean{\alpha_k}}>\epsilon}.
	% \end{align*}
	% Now, 
	% let $\vec{\alpha}$ be the $m$-vector random variable given by $\alpha_1,\dots,\alpha_m$, and $C_\alpha$ the associated covariance  matrix. The $\pp{\sum x_k \bar{\alpha}_k}^2=\sum x_k x_j \bar{\alpha}_k \bar{\alpha}_j $ 
	% \begin{align*}
	% \prob{\norm{\sum_k \beta_k\vec f_k \bar{\alpha}_k}>\epsilon}%
	% &\le \frac{1}{\epsilon^2}\mean{\norm{\sum_k \beta_k\vec f_k} ^2}
	% \mean{\sum_k \bar{\alpha}_k^2}\\
	% &\le \frac{1}{\epsilon^2} \mean{\norm{\bar{\vec \alpha}_k}^2}\sum_k\mean{\beta_k^2\norm{\vec f_k}^2}.
	% \end{align*}
	% \begin{align*}
	% \prob{\norm{\sum_k \beta_k\vec f_k \mean{\alpha_k}}>\epsilon}%
	% \le \frac{1}{\epsilon^2}%
	% \mean{\norm{\sum \beta_k \vec f_k \mean{\alpha_k}}^2}%
	% =
	% \frac{1}{\epsilon^2}%
	% \sum_k \mean{\beta_k^2\norm{\vec f_k}^2} \pp{\mean{\alpha_k}}^2%
	% \end{align*}
	% Thus, if $\sum_k \mean{\beta_k^2 \norm{\vec f_k}^2}<\delta^2$,
	% \begin{align*}
	% \prob{\norm{\sum_k \beta_k\vec f_k \bar{\alpha}_k}>\epsilon}%
	% \le \frac{1}{\epsilon^2} \pp{\norm{\mean{\bar{\vec \alpha} }^2}+\mean{\alpha_k} ^2} \delta^2.
	% \end{align*}
	If \cref{ass:e} holds, the condition on the WOS error $\vec f_k$ implies that
	\[
		\mean{\norm{\vec f_k}^2\,| \,\mathcal{F}_{k-1}}%
		\le \frac{(\varepsilon\, \delta^{
				(k-1)})^2}{ (m\,\norm{\vec r_{k-1}})^2}%
		=\frac{\varepsilon^2}{m\,\beta_ {k-1}^2}.
	\]
	Otherwise, 	$\mean{\norm{\vec f_k}^2\,| \,\mathcal{F}_{k-1}}%
		\le \varepsilon^2/m^2.
	$ Hence,
	\[
		\prob{\norm{F_m \vec w}>\mathrm{tol}}%
		\le \frac{\varepsilon^2}{\mathrm{tol}^2  } \,\mean{\frac 1m\norm{\vec \alpha}^2}.
	\]
	Using \cref{evres}, this implies \cref{pt} under the condition \cref{nr}.

	Under \cref{ass:e},  \cite[Proposition~2.2]
	{Simoncini2005-ra} implies that
	\[
		\abs{\vec e_k^\trans \vec w}%
		\le 2 \frac{\norm{\vec r_{k-1}}}
		{\delta_{m,k-1}}.
	\]
	Here, it is important to note
	that $\delta_{m,k-1}$  is not $\mathcal{F}_{k}$-measurable in general and depends on the full Arnoldi run. Then,
	\[
		\abs{\alpha_k}%
		\le 2 \frac{ \delta^{(k-1)}}{ \delta_{m,k-1}},
	\]
	which is \cref{cookie}.
	The final statement is a consequence of the Bauer--Fike theorem for
	normal matrices~\cite{Golub2013-ip}, which says that
	the error in the eigenvalue is bounded by the eigenvalue residual.
\end{proof}

This theorem suggests a practical way of choosing the WOS tolerance
at step $k$ in dependence on a given eigenvalue-residual tolerance $\mathrm{tol}$, parameter $\varepsilon$, and computed residual $\norm{\vec r_k}$. For $B>1$, to achieve
\[
	\prob{\text{eigenvalue residual}>2\,\mathrm{tol}}%
	%	\le \frac{\varepsilon^2}{\mathrm{tol}^2} \mean{\frac1m \norm{\vec \alpha}^2}%
	\le \frac1{B^2}, %\mean{\frac1m \norm{\vec \alpha}^2},%,
\]
we assume that $\mean{\frac1m \norm{\vec \alpha}^2}\approx 1$,
and
choose $\varepsilon= \mathrm{tol}/B$ for $B> 1$. Then,
\[
	\prob{\text{eigenvalue residual}>2\,\mathrm{tol}}%
	\le \frac{\varepsilon^2}{\mathrm{tol}^2} \mean{\frac1m \norm{\vec \alpha}^2}%
	\le \frac1{B^2}.
\]
For the $k$th WOS solve, from \cref{tt}, we demand that
\[
	\pp{\mean{\norm{\vec f_k}^2| \mathcal{F}_{k-1} } }^{1/2}%
	\le \frac{1}{B\, m}\,\mathrm{tol}\, \frac{\delta^{(k-1)} }{\norm{\vec
			r_{k-1}}}.
\]
This leads to a relaxed accuracy condition for the WOS calculation
if the computed eigenvalue residual $\norm{\vec r_{k-1}}$ is smaller than
the spectral gap $\delta^{(k-1)}$. It is simple to implement and requires
computing the spectrum of the $k\times  k$ Hessenberg matrix $H_k$ at each
step (to determine $\delta^{(k-1)}$) and monitoring the variance in the WOS Monte Carlo calculation.

\subsection{Leading eigenvalue on the unit ball}
Dyda~\cite{Dyda2012-kl} provides upper and lower bounds on the
leading eigenvalue for the fractional Laplacian on the unit ball,
gained by rigorous analytical methods. We compare this to the
eigenvalues computed by \cref{alg}. \cref{tab2} shows the results of
a computation with $\mathrm{tol}=0.01$, $B=3$, and WOS multilevel parameters
$\ell_0=3$, and $L=7$ with five Arnoldi iterations. The Arnoldi
iteration produces estimates that are very close to Dyda's upper
bound (the error relative to the upper bound are in the range $
	3\times 10^{-4}$ to $ 10^{-3}$). The computations take between take
fifteen and thirty minutes  for a Julia implementation on a
quad-core 3.2Ghz i5-6500 CPU  with  8GB RAM (three cores are used in
parallel for the WOS samples). The run times are compared in \cref{last}
to the Arnoldi algorithm without variable accuracy, taking the same tolerance
in both runs. The variable accuracy algorithm is twice as fast, for the
same level of accuracy.
%-------------------------------
\begin{table}
	\begin{tabular}{c  l l  l l r r}
		                            & \multicolumn{2}{l}{Dyda eigenvalue} &
		\multicolumn{3}{l}{Arnoldi} & error
		relative                                                                                                                               \\
		$\alpha$                    & lower bound                         & upper bound & eigenvalue & residual & seconds & to upper bound     \\
		\midrule
		0.1                         & 1.04874                             & 1.05096     & 1.05187    & 0.0672   & 885     & $8\times 10^{-4}$  \\
		0.2                         & 1.10549                             & 1.10993     & 1.11103    & 0.0896   & 933     & $9 \times 10^{-4}$ \\
		0.5                         & 1.3313                              & 1.34374     & 1.34464    & 0.8147   & 838     & $6\times 10^{-4}$  \\
		1.0                         & 1.96349                             & 2.00612     & 2.00689    & 0.095    & 907     & $3\times 10^{-4}$  \\
		1.5                         & 3.13569                             & 3.27594     & 3.27789    & 0.017    & 1008    & $5\times 10^{-4}$  \\
		1.8                         & 4.28394                             & 4.56719     & 4.57029    & 0.014    & 1268    & $6 \times 10^{-4}$ \\
		1.9                         & 4.77496                             & 5.13213     & 5.13936    & 0.00075  & 1711    & $1\times 10^{-3}$
	\end{tabular}
	\caption{\cite{Dyda2012-kl} provides the lower and upper bounds on
		the smallest eigenvalue of the fractional Laplacian on $B(\vec 0,1)$ shown. These are compared to the result of an Arnoldi computation with five iterations and the variable accuracy methods described.}\label{tab2}
\end{table}

\begin{figure}
	\begin{center}
		\begin{tikzpicture}
			\begin{axis}[xlabel={$\alpha$}, ylabel={Elapsed time (seconds)}, width=\textwidth*0.4];
				\addplot+[]
				coordinates {
						(0.5,349) (1.0,320) (1.5,550)};
				%\addlegendentry{Variable accuracy}
				%
				\addplot+[style=dashed]
				coordinates {						(0.5,922) (1.0,1228) (1.5,1797)};%
				%\addlegendentry{Standard}
				%
			\end{axis}
		\end{tikzpicture}
		\qquad
		\begin{tikzpicture}
			\begin{axis}[xlabel={$\alpha$}, ylabel={Relative error},
					width=\textwidth*0.4
				];
				\addplot+[]
				coordinates {						(0.5,0.070) (1.0,0.056) (1.5,0.018)};
				\addplot+[style=dashed]
				coordinates {						(0.5,0.107) (1.0,0.0700) (1.5,0.02)};%
			\end{axis}
		\end{tikzpicture}
	\end{center}
	\caption{Comparison of run times with (solid) and without (dashed) variable
		accuracy, using $\mathrm{tol}=1e-2$, $L=7$, $\ell_0=5$, $B=3$. The
		left-hand plot shows run times in seconds against $\alpha$. The right-hand
		plot show the relative accuracy (relative to the upper bound of Dyda's
		interval). }\label{last}
\end{figure}
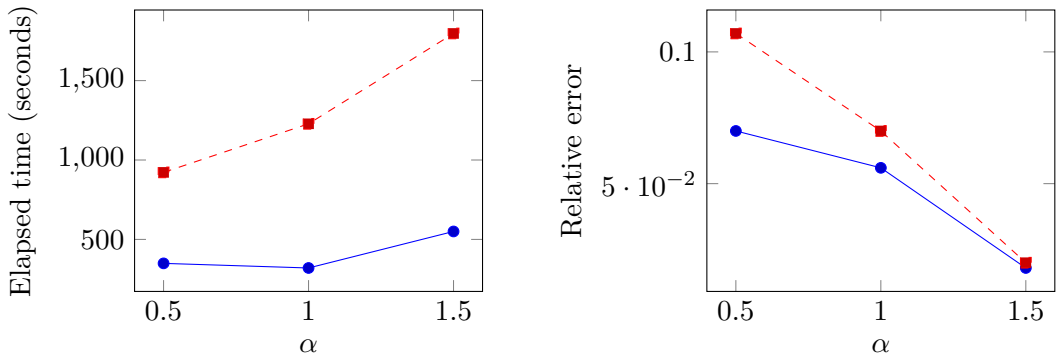

%-----------------------------------
\section{Conclusion}%
\label{conclusion}%
We have discussed Walk Outside Spheres for simulating
the whole field rather than a point value of the solution $u\colon D
	\to \real$ of \cref{main}, extending the algorithm of \cite{kos}. By
using the multilevel Monte Carlo algorithm, we improved substantially
on a naive method based on independent sampling at vertices. The
improvement is demonstrated analytically (an improvement in the
complexity for accuracy $\varepsilon$ of factor $\varepsilon^\beta$
where $\beta\ge \min\Bp{\alpha,t\mu\alpha/ (t+\mu\alpha)}$. The parameters
$\mu,t$ lie in the range $(0,1)$ and are generally unknown; numerical
examination of the relevant
assumptions shows that $t$ can
be chosen close to one and $\mu$ depends on $\alpha$ ($\mu$ may be chosen
close to
one for small $\alpha$ and must be reduced substantially as $\alpha\to 2$).
The improvement is also
demonstrated numerically, by looking at two problems with exact
solutions and a third problem where variance estimates were made.
Numerical
experiments show the complexity bounds are pessimistic, even for $t=\mu=1$.
This
is because the assumptions and analysis are based on a pair of coupled WOS processes,
while
the $L^2(D)$ error depends on an average over a WOS path for each initial
vertex.

There are several deterministic approaches to the numerical approximation of the exterior-value problem for the fractional Laplacian \cite{Lischke2018-qb}. We compare our results to the complexity and error analysis for the adaptive finite-element method (AFEM) of \cite{Ainsworth2017-xf,Ainsworth2018-oo}. By using  a posterioi error estimates and  sparse approximations to the  dense linear systems resulting from the global coupling in the fractional Laplacian,  it converges in the $L^2(D)$ sense in two dimensions with $\sorder{n^{-1/2-\alpha/4}}$, where $n$ is the number of degrees of freedom.  Due to the use of a clustering technique in assembly of the linear system, the solution can be computed in $\order{n \log^4(n)}$ operations. In terms of an $L^2(D)$ accuracy of $\epsilon$, this method requires $\sorder{\epsilon^{-2+2\alpha/(2+\alpha)-\delta}}$  operations on a polygonal domain in two dimensions (for any $\delta>0$). In contrast, the WOS field method on a uniform mesh takes $\sorder{\epsilon^{-3+\beta/2}}$ operations to achieve a root-mean-square $L^2(D)$ accuracy of $\epsilon$,    where $\beta$ is given in \cref{cor:complexity}.

Though AFEM is one order of magnitude faster, the field WOS method has significant potential. First, the WOS method  is trivial to parallelise and this  means the constant associated to the  complexity analysis can be made small, given sufficient parallel resources. This can be very significant in practical situations.   Second, the WOS method developed here is a classical Monte Carlo method, depending on a sequence of independent samples from certain probability distributions. The complexity of such methods depends on the $1/\sqrt{M}$ sampling error from  $M$ independent samples, even for multilevel Monte Carlo methods. Very often, the complexity  is substantially improved by employing quasi-Monte Carlo techniques. In situations where the sample  depends on an infinite number of random variables and the importance of these random variables decays suitably rapidly, the $\sorder{1/\sqrt{M}}$ error can be replaced by $\sorder{1/M^{1-\delta}}$, for $\delta>0$ \cite{Hickernell2000-eo, Graham2011-yz}. This sort of analysis has not been completed for the field WOS method and is beyond the scope of the present paper. It is worth noting that each $L^2(D)$ sample depends on a finite but unbounded number of random variables (depending on the number of steps for the WOS path to exit the domain), but only one set of random variables is used for each vertex in the mesh (due to coupling) and the importance of these random variables decays geometrically (as the exit time is geometrically distributed \cite{kos}). It will be a subject of future research to develop a quasi-Monte Carlo-based field-WOS method with  improved complexity. If the $1/\sqrt{M}$ is replaced by $1/M$ in the complexity analysis, the overall complexity  improves by a factor $\epsilon$. There is a potential also to improve the method by using an adaptive mesh. At this point, the method becomes  competitive with AFEM. If these issues can be overcome,  a larger class of particle methods for solving PDEs can be coupled in the same way to give efficient field solvers.

Finally, we used the WOS algorithm to compute the leading eigenvalue
of the fractional Laplacian. We developed a criterion for accuracy
at the $k$th Arnoldi iteration based on the spectral gap of the
Hessenberg matrix and the residual. The method is shown to give
accurate results by comparing to analytical results of
\cite{Dyda2012-kl}. This algorithm can be developed
further to get more leading eigenvalues and to incorporate the
implicitly restarted Arnoldi method. Note the shift strategy (i.e., solving for $(A-sI)\vec x=\vec b$ for
a shift
value $s)$ commonly used in eigenvalue solvers is not easy to apply with
WOS; the implicitly restarted Arnoldi algorithm allows shifts to be introduced
implicitly and only solves for the fractional Laplacian are required. Here again variable accuracy
strategies
are available \cite{Freitag2010-kz} and could be adapted to the randomly inexact inner
solves. This is the subject of future work.
%---------------------------------
%
\appendix

\section{On \cref{hole,skin2}}%
\label{app}%
We verify that \cref{hole,skin2} are
realistic, by computing the relevant quantities numerically
for a square domain. For some
$t,\lambda,\mu\in (0,1)$ and $A>0$, we wish to establish that
\[
	I_1(\vec x_0)\coloneq \mean{\frac{\Phi(\vec x_1)}{\Phi(\vec x_0)}\,1_{\Bp{\vec
		x_1\in D}}\,|\,\vec x_0} \le \lambda,\qquad \Phi(\vec x)\coloneq \max\Bp{A,\frac{1}{d(\vec x)^t}}%
\]
and
\[
	I_2(\vec x_0,\vec y_0)\coloneq  \mean{\pp{\frac{\norm{\vec x_1-\vec y_1}}{\norm{\vec x_0-\vec
				y_0}}}^{\mu\alpha}\,1_{\Bp{\vec x_1,\vec y_1\in D}}} \le \lambda,\qquad \text{for all $\vec x_0,\vec y_0\in D$}.
\]
If we replace $A$ by $A/\diameter^t$ for the diameter $\bar d$ of $D$, it
is clear both conditions are invariant to rescaling the domain. Hence, we
focus on a box $D=[0,1]\times [0,1]$. We compute the values by a simple
Monte Carlo method using $\vec x_1=\vec x_0+\vec \Theta\,d(\vec x_0)/\sqrt{\beta}$,
for $\vec\Theta\sim \mathrm{U}(S^1)$ and $\beta\sim \mathrm{Beta}(\alpha/2,1-\alpha/2)$.

The expression for $I_2$  involves one parameter $\mu$ but must be checked
for every pair of $\vec x_0,\vec y_0\in D$. We draw twenty
$\vec x^j_0,\vec y^j_0$
independently from $\mathrm{U}(D)$ and evaluate $I_2(\vec x_0^j,\vec y_0^j)$
using $10^6$ samples of $\vec x_1,\vec y_1$ and record the maximum value.
We show a plot of $\max_j I_2(\vec x^j_0,\vec y^j_0)$ against $\alpha$ for
$\mu=1,0.9,0.5$ in \cref{hell}. The condition $I_2(\vec x_0,\vec y_0)\le
	\lambda<1$ is satisfied with $\mu=1$ for $\alpha$ small. For larger values
of $\alpha$, $\mu$ must be reduced; for example, for $\alpha=1$,
$\mu$ must be reduced to $\mu\approx 0.5$.
\begin{figure}
	\begin{center}
		\begin{tikzpicture}
			\begin{axis}[xlabel={$\alpha$}, ylabel={$\max_j I_2$}, width=\textwidth*0.4,  legend style={at={(1.3,1.)},anchor=north}];
				\addplot+[]
				coordinates {
						(0.25,0.2) (0.5,0.6) (1.0,1.3)
						(1.5,1.53) (1.8,1.7)	 (1.9, 1.72)};
				\addlegendentry{$\mu=1$}
				\addplot+[]
				coordinates {
						(0.25,0.20) (0.5,0.48) (1.,1.1)
						(1.5,1.24) (1.8, 1.43) (1.9,1.55)};%
				\addlegendentry{$\mu=0.9$}
				\addplot+[mark=diamond]
				coordinates {
						(0.25,0.2) (0.5,0.48) (1.,0.73)
						(1.5,0.92) (1.8, 0.98) (1.9,1.2)};%
				\addlegendentry{$\mu=0.5$}
			\end{axis}
		\end{tikzpicture}
	\end{center}
	\caption{The maximum of $\max_{j=1,\dots,20} I_2(\vec x_0^j,\vec y^j_0)$
		based on computing the expectation with $10^6$ samples, for $\mu=0.5,0.9,1.0$.}\label{hell}
\end{figure}
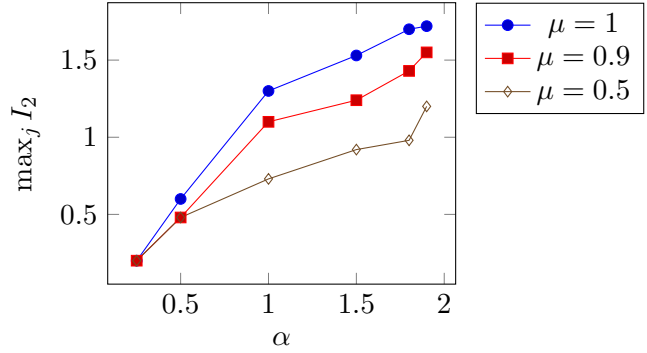

The expression for $I_1$ involves two parameters $A,t$, and we expect the
appropriate choice of $t$ to depend  on $\alpha$ and for $A\to\infty$ as
$t\to 1$. Again, we evaluate $I_1(\vec x_0)$ based on $10^6$ samples and
\cref{heaven} shows the result of $\max_j I_1(\vec x_0^j)$ for different
choice
of $A$ and $t$. By choice of $A$, we can always achieve $\max_j I_1(\vec
	x_0^j)\le \lambda<1$ for $t=0.9$.

\begin{table}
	\begin{center}
		\begin{tabular}{c  c  c r }
			$\alpha$ & $A$              & $t$            & $\max_j I_1(\vec x_0^j)$ \\
			\midrule
			0.5      & $10^4$           & 1.0            & 0.53                     \\
			1.0      & $\smash{\vdots}$ & \smash{\vdots} & 1.03                     \\
			1.5      & $\smash{\vdots}$ & 0.9            & 3.55                     \\
			1.5      & $\smash{\vdots}$ & 0.5            & 0.86                     \\
			1.8      & $\smash{\vdots}$ & \smash{\vdots} & 0.93                     \\
			1.5      & $10^5$           & 0.9            & 0.85                     \\
			1.8      & $10^6$           & \smash{\vdots} & 0.94
		\end{tabular}
	\end{center}
	\caption{The computation of $\max_{j=1,\dots,20} I_1(\vec x_0^j)$
		is based on computing the expectation with $10^6$ samples. The $\smash{\vdots}$
		symbol indicates
		the value is given by the entry above.}
	\label{heaven}
\end{table}

\bibliographystyle{alpha}
{\footnotesize\bibliography{field_wos}}
\end{document}